\newtheorem{theorem}{Theorem}[section]
\newtheorem{corollary}{Corollary}[section]
\newtheorem{lemma}{Lemma}[section]
\newtheorem{algorithm}{Algorithm}[section]
\numberwithin{equation}{section}
\newcommand{\mB}[1]{{\mathbb{#1}}}
\newcommand{\B}[1]{{\bf #1}}
\newcommand{\R}[1]{{\rm #1}}
\newcommand{\T}[1]{{\tt #1}}
\begin{document}

\title{Kalman smoothing  and block tridiagonal systems: new connections and
numerical stability results}
\author{Aleksandr Y. Aravkin, Bradley M. Bell, James V. Burke, Gianluigi Pillonetto
\thanks{Aleksandr Y. Aravkin
(saravkin@us.ibm.com)
is with the IBM T.J. Watson Research Center,
Yorktown Heights, NY 10598}
\thanks{Bradley M. Bell
(bradbell@apl.washington.edu)
is with Applied Physics Lab University of Washington, Seattle, WA}
\thanks{James V. Burke
(burke@math.washington.edu)
is with Department of Mathematics University of Washington, Seattle, WA}
\thanks{Gianluigi Pillonetto
(giapi@dei.unipd.it)
is with Dipartimento di Ingegneria dell'Informazione,
University of Padova, Padova, Italy.}}

\maketitle
\thispagestyle{empty}
\pagestyle{empty}

\begin{abstract}

The Rauch-Tung-Striebel (RTS) and the Mayne-Fraser (MF) algorithms are two
of the most popular smoothing schemes to reconstruct the state of
a dynamic linear system from measurements collected on a fixed interval.
Another (less popular) approach is the Mayne (M) algorithm introduced
in his original paper under the name of Algorithm A.
In this paper, we analyze these three smoothers from an optimization and
algebraic perspective,
revealing new insights on their numerical
stability properties. In doing this, we re-interpret classic recursions as matrix
decomposition methods
for block tridiagonal matrices.\\
First, we show that the classic RTS smoother is an implementation of the
forward block tridiagonal (FBT) algorithm (also known as Thomas algorithm) for particular block tridiagonal
systems.
We study the numerical stability properties of this scheme, 
connecting the condition number of the full system to properties of the
individual blocks encountered during standard recursion.\\
Second, we study the M smoother, and prove it is equivalent to a
backward block tridiagonal
(BBT) algorithm with 
a stronger stability guarantee than RTS.\\
Third, we illustrate how the MF smoother solves a block tridiagonal system, and prove
 that it has the same numerical stability properties of RTS (but not those of M).\\
Finally, we present a new hybrid RTS/M (FBT/BBT) smoothing scheme,
which is faster than MF, and has the same numerical stability guarantees of RTS and MF.
\end{abstract}

\section{Introduction}

Kalman filtering and smoothing methods form a broad category 
of computational algorithms used for inference on 
noisy dynamical systems. Since their invention~\cite{kalman} 
and early development~\cite{KalBuc},
these algorithms have become a gold standard in 
a range of applications, including space exploration, 
missile guidance systems, general 
tracking and navigation, and weather prediction. 
Numerous books and papers have been written on these methods and their extensions, 
addressing modifications for use in nonlinear systems~\cite{Grew2007}, 
robustification against unknown models~\cite{Pet},
smoothing data over time intervals~\cite{Anderson:1979,YAA}. Other studies regard 
Kalman smoothing with unknown parameters \cite{Bell2000}, 
constrained Kalman smoothing \cite{Bell2009},
and robustness against bad measurements~\cite{Durovic1999,Meinhold1989,Cipra1997,Schick1994,Masreliez1977,Kirlin1986,Kassam1985,AravkinIeee2011}
and sudden changes in state
\cite{SYSID2012tks,AravkinBurkePillonetto2013,Farahmand2011}. 

In this paper, we consider a linear Gaussian model  
specified as follows: 
%
\begin{equation}
\label{IntroGaussModel}
\begin{array}{rcll}
	{x_1}&= & x_0+{w_1},
	\\
	{x_k} & = & G_k {x_{k-1}}  + {w_k}& k = 2 , \ldots , N,
	\\
	{z_k} & = & H_k {x_k}      + {v_k}& k = 1 , \ldots , N\;,
\end{array}
\end{equation}
where $x_0$ is known,  ${x_k}, {w_k} \in \mB{R}^n$, ${z_k}, {v_k} \in \mB{R}^{m(k)}$, 
(measurement dimensions can vary between time points) $G_k \in \mB{R}^{n\times n}$ and $H_k \in \mB{R}^{m(k)\times n}$. 
Finally, ${w_k}$, ${v_k}$ are mutually independent zero-mean Gaussian random variables
with known positive definite covariance matrices $Q_k$ and $R_k$, respectively.
Extensions have been proposed that work with singular $Q_k$ and $R_k$ 
(see e.g.~\cite{Bierman1983,KitagawaGersh,Chui2009}),
but in this paper we confine our attention to the nonsingular case. \\


\paragraph{Kalman smoothing algorithms} 

To obtain the minimum variance estimates of the states given the full 
measurement sequence $\{z_1, \dots, z_N\}$, two of the most popular Kalman smoothing schemes
are the Rauch-Tung-Striebel (RTS) and the Mayne-Fraser (MF) algorithm based on the two-filter formula.\\
RTS was derived in \cite{RTS} and computes the state estimates
by means of forward-backward recursions which are sequential in nature 
(first run forward, then run backward). 
An elegant and simple derivation 
of RTS using projections onto spaces spanned by suitable random variables
can be found in \cite{Ansley1982}.\\
MF was proposed by \cite{Mayne1966,FraserPotter1969}.
It computes the smoothed estimate as a 
combination of forward and backward Kalman filtering estimates, which can be run independently
(and in particular, in parallel).  
A derivation of MF
from basic principles where the backward recursions are related to  
maximum likelihood state estimates can be found in \cite{Willsky1981}.\\
A third algorithm we study (and dub M)
was also proposed by Mayne. It is less popular than RTS and MF and appears in \cite{Mayne1966}
under the name of Algorithm A (while MF corresponds to Algorithm B in the same paper).
M is similar to RTS, but the recursion is first run backward in time, and then forward.\\
A large body of theoretical results 
can be found in the literature on smoothing, and there are many interpretations
of estimation schemes under different perspectives, e.g. see \cite{Ljung:1976}.
However, 
clear insights on the numerical stability properties of RTS, MF and M are missing.
In order to fill this gap, we first analyze these three smoothers from an optimization and
algebraic perspective, interpreting all of them as different ways of solving 
block tridiagonal systems. Then, we exploit
this re-interpretation, together with 
the linear algebra of block tridiagonal systems, 
to obtain numerical stability properties of the smoothers. 
It turns out that RTS and MF share the same numerical stability properties, whereas M
has a stronger stability guarantee.\\

\paragraph{Outline of the paper} 
The paper proceeds as follows. In Section \ref{sec:KSblk}
we review Kalman smoothing, 
and formulate it as a least squares problem 
where the underlying system is block tridiagonal. 
In Section~\ref{sec:theory} 
we obtain bounds on the eigenvalues of these systems in terms of
the behavior of the individual blocks, and show how these bounds are related to
the stability of Kalman smoothing formulations. 
In Section~\ref{sec:forward} we show that the classic RTS smoother
implements the forward block tridiagonal (FBT) algorithm, also known as Thomas algorithm, 
for the system~\eqref{smoothingSol}. 
We demonstrate a flaw in the 
stability analysis for FBT given in~\cite{Bell2000}, 
and prove a new more powerful  stability result that shows 
that {\it any} stable system from Section~\ref{sec:theory}
can be solved using the forward algorithm. 
In Section~\ref{sec:backward}, we introduce the block backward
tridiagonal (BBT) algorithm, and prove its equivalence to M
(i.e. Algorithm A in~\cite{Mayne1966}). 
We show that M has a unique numerical stability guarantee: in contrast
to what happens with RTS, the 
eigenvalues of the individual blocks generated by M during the backward and 
forward recursions are actually
{\it independent of the condition number} of the full system. 
A numerical example illustrating these ideas is given in Section~\ref{sec:Numerics}.
Next, in Section~\ref{sec:MF}, 
we discuss the MF smoother, elucidating how it solves the block tridiagonal system, 
and showing that the two-filter formula has the same numerical stability properties of RTS
but does not have the numerical stability guarantee of M. 
Finally, in Section~\ref{sec:NewAlgo} we propose a completely new efficient 
algorithm for block tridiagonal systems.  
We conclude with a discussion of these results and their consequences.

\section{Kalman smoothing and block tridiagonal systems} 
\label{sec:KSblk}

Considering model \ref{IntroGaussModel} and using Bayes' theorem, we have 
\begin{equation}\label{Bayes}
\B{p}\left(x_k \big| z_k\right) \propto \B{p}\left(z_k\big|x_k\right)\B{p}\left(x_k\right)= \B{p}(v_k)  \B{p}(w_k)\;,
\end{equation}
and therefore the likelihood of the entire state sequence $\{x_k\}$ given the entire measurement 
sequence $\{z_k\}$ is proportional to
\begin{equation}
\label{MAP}
\begin{aligned}
\prod_{k=1}^N \B{p}(v_k)  \B{p}(w_k)
& \propto 
\prod_{k=1}^N \exp\Big( -\frac{1}{2}(z_k - H_k x_k)^\top R_k^{-1}(z_k - H_k x_k) \\
& \quad\quad-\frac{1}{2}(x_k - G_k x_{k-1})^\top Q_k^{-1}(x_k - G_k x_{k-1})\Big)\;.
\end{aligned}
\end{equation}
A better (equivalent) formulation to~\eqref{MAP} is minimizing its negative log posterior: 
\begin{equation}
\label{logMAP}
\begin{aligned}
\min_{\{x_k\}} f(\{x_k\})&:= 
\sum_{k=1}^N\frac{1}{2}(z_k - H_k x_k)^\top R_k^{-1}(z_k - H_k x_k) \\
&+\frac{1}{2}(x_k - G_k x_{k-1})^\top Q_k^{-1}(x_k - G_k x_{k-1})\;.
\end{aligned}
\end{equation}

To simplify the problem, we introduce data structures that capture the entire state sequence, measurement sequence, 
covariance matrices, and initial conditions. 

Given a sequence of column vectors $\{ u_k \}$
and matrices $ \{ T_k \}$ we use the notation
\[
\R{vec} ( \{ v_k \} )
=
\begin{bmatrix}
v_1 \\ v_2  \\ \vdots \\ v_N
\end{bmatrix}
\; , \;
\R{diag} ( \{ T_k \} )
=
\begin{bmatrix}
T_1    & 0      & \cdots & 0 \\
0      & T_2    & \ddots & \vdots \\
\vdots & \ddots & \ddots & 0 \\
0      & \cdots & 0      & T_N
\end{bmatrix} .
\]
We make the following definitions:
\begin{equation}\label{defs}
\begin{aligned}
R       & =  \R{diag} ( \{ R_k \} )
\\
Q       & =  \R{diag} ( \{ Q_k \} )
\\
H       & = \R{diag} (\{H_k\} )
\end{aligned}\quad \quad
\begin{aligned}
x       & = \R{vec} ( \{ x_k \} )
\\
\zeta      &  = \R{vec} (\{x_0, 0, \dots, 0\})
\\
z      & = \R{vec} (\{z_1,  z_2, \dots, z_N\})
\end{aligned} 
\end{equation}

\begin{equation}
\label{processG}
\begin{aligned}
G  & = \begin{bmatrix}
    \R{I}  & 0      &          &
    \\
    -G_2   & \R{I}  & \ddots   &
    \\
        & \ddots &  \ddots  & 0
    \\
        &        &   -G_N  & \R{I}
\end{bmatrix}\;.
\end{aligned}
\end{equation}

With definitions in~\eqref{defs} and~\eqref{processG}, 
problem~\eqref{logMAP} can be written
\begin{equation}\label{fullLS}
\min_{x} f(x) =  \frac{1}{2}\|Hx - z\|_{R^{-1}}^2 + \frac{1}{2}\|Gx -\zeta\|_{Q^{-1}}^2\;,
\end{equation}
where $\|a\|_M^2 = a^\top Ma$.
It is well known that finding the MAP estimate is equivalent to a least-squares problem, 
but this derivation makes the structure fully transparent. 
We now write down the linear system that needs to be solved in order 
to find the solution to~\eqref{fullLS}: 
\begin{equation}\label{smoothingSol}
(H^\top R^{-1} H + G^\top Q^{-1} G) x =  H^\top R^{-1}z + G^\top Q^{-1}\zeta\;.
\end{equation}

The linear system in~\eqref{smoothingSol} has a very special structure:
it is a symmetric positive definite block tridiagonal matrix. 
To observe it is positive definite, note that $G$ is nonsingular (for any models $G_k$) 
and $Q$ is positive definite by assumption. 
Direct computation shows that
\begin{equation}
\label{hessianApprox}
H^\top R^{-1} H + G^\top Q^{-1} G
=
\begin{bmatrix}
D_1 & A_2^\R{T} & 0 & \\
A_2 & D_2 & A_3^\R{T} & 0 \\
0 & \ddots & \ddots& \ddots & \\
& 0 & A_N & D_N
\end{bmatrix} ,
\end{equation}
with $A_k \in \mB{R}^{n\times n}$ and
$D_k \in \mB{R}^{n\times n}$ defined as follows:
\begin{equation}
\label{KalmanData}
\begin{aligned}
A_k
&=&
-Q_k^{-1}G_{k}\; , \;\\
D_k
&=&
Q_k^{-1} + G_{k+1}^\top Q^{-1}_{k+1}G_{k+1} +H_k^\top R_k^{-1}H_k\; .
\end{aligned}
\end{equation}
with $G_{N+1}^\top Q^{-1}_{N+1}G_{N+1} $ defined to be the null
$n \times n$ matrix.

This block tridiagonal structure was noted early on in~\cite{Wright1990,Fahr1991,Wright1993}.
These systems also arise in many recent extended formulations, 
see e.g.~\cite[(15)]{Bell2000},~\cite[(12)]{Bell2009},~\cite[(8.13)]{AravkinBurkePillonetto2013}.

\section{Characterizing block tridiagonal systems}
\label{sec:theory}

Consider systems of form 
\begin{equation}
\label{triSys}
E = g^{\R{T}} q^{-1} g
\end{equation}
where 
\[
q = \R{diag}\{q_1, \dots q_N\}, \quad g =  \begin{bmatrix}
    \R{I}  & 0      &          &
    \\
    g_2   & \R{I}  & \ddots   &
    \\
        & \ddots &  \ddots  & 0
    \\
        &        &   g_N  & \R{I}
\end{bmatrix}\;.
\]
Let $\lambda_{\min}$, $\lambda_{\max}$, and $\sigma_{\min}$, 
$\sigma_{\max}$ denote the minimum and maximum eigenvalues and singular
values, respectively. 
Simple upper bounds on the lower and upper 
eigenvalues of $E$ are derived in the following theorem. 
\begin{theorem}
\label{simpleBounds}

\begin{equation}
\label{lower}
\frac{\sigma^2_{\min}(g)}{\lambda_{\max}(q) } \leq 
\lambda_{\min}(E) \leq \lambda_{\max}(E) \leq 
\frac{\sigma^2_{\max}(g)}{\lambda_{\min}(q)}\;.
\end{equation}

\end{theorem}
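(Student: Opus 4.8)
The plan is to reduce both outer bounds to the variational (Rayleigh quotient) characterization of the extreme eigenvalues of the symmetric matrix $E$. Since $g$ is block lower triangular with identity diagonal blocks it is nonsingular, and $q$ is symmetric positive definite by construction, so $E = g^{\R{T}} q^{-1} g$ is symmetric positive definite; hence $\lambda_{\min}(E) = \min_{x \neq 0} (x^{\R{T}} E x)/(x^{\R{T}} x)$ and $\lambda_{\max}(E) = \max_{x \neq 0} (x^{\R{T}} E x)/(x^{\R{T}} x)$. The single structural observation driving everything is the substitution $y = gx$, under which $x^{\R{T}} E x = (gx)^{\R{T}} q^{-1} (gx) = y^{\R{T}} q^{-1} y$, so the quadratic form associated with $E$ factors through $q^{-1}$ composed with the linear map $g$.

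Next I would bound the two pieces of $x^{\R{T}} E x = y^{\R{T}} q^{-1} y$ separately. Because $q$ is positive definite, the eigenvalues of $q^{-1}$ lie in $[1/\lambda_{\max}(q),\, 1/\lambda_{\min}(q)]$, which gives $\|y\|^2 / \lambda_{\max}(q) \le y^{\R{T}} q^{-1} y \le \|y\|^2 / \lambda_{\min}(q)$ with $y = gx$. For the remaining factor $\|gx\|^2 = x^{\R{T}} g^{\R{T}} g x$, the extreme eigenvalues of $g^{\R{T}} g$ are by definition $\sigma_{\min}^2(g)$ and $\sigma_{\max}^2(g)$, so $\sigma_{\min}^2(g)\,\|x\|^2 \le \|gx\|^2 \le \sigma_{\max}^2(g)\,\|x\|^2$. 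Chaining these two sandwiches yields, for every $x \neq 0$, the pointwise estimate $\sigma_{\min}^2(g)/\lambda_{\max}(q) \le (x^{\R{T}} E x)/(x^{\R{T}} x) \le \sigma_{\max}^2(g)/\lambda_{\min}(q)$.

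Finally, minimizing the Rayleigh quotient over $x$ delivers the lower bound on $\lambda_{\min}(E)$, maximizing delivers the upper bound on $\lambda_{\max}(E)$, and the central inequality $\lambda_{\min}(E) \le \lambda_{\max}(E)$ is automatic. I expect no genuine obstacle here, since the whole argument is a two-stage Rayleigh quotient sandwich; the one place that demands care is the bookkeeping of the inversion, because passing from $q$ to $q^{-1}$ reverses the eigenvalue ordering, which is exactly why $\lambda_{\max}(q)$ (and not $\lambda_{\min}(q)$) sits in the denominator of the lower bound and conversely for the upper bound. I would also remark that nonsingularity of $g$ forces $\sigma_{\min}(g) > 0$, so the lower bound is strictly positive, consistent with $E$ being positive definite.
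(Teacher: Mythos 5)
Your proposal is correct and takes essentially the same route as the paper's own proof: both bound the quadratic form $v^{\R{T}} g^{\R{T}} q^{-1} g v$ by sandwiching the $q^{-1}$ factor between $1/\lambda_{\max}(q)$ and $1/\lambda_{\min}(q)$ and the factor $\|gv\|^2$ between $\sigma_{\min}^2(g)\|v\|^2$ and $\sigma_{\max}^2(g)\|v\|^2$. The only cosmetic difference is that you invoke the Rayleigh-quotient variational characterization over all $x \neq 0$, while the paper applies the same pointwise inequality directly to unit eigenvectors of $E$ — these are the same argument.
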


\begin{proof}
For the upper bound, note that for any vector $v$,  
\[
v^{\R{T}} g^{\R{T}} q^{-1} g v \leq \lambda_{\max}(q^{-1}) \|gv\|^2 \leq \frac{\sigma^2_{\max}(g)}{\lambda_{\min}(q)}\|v\|^2 .
\]
Applying this inequality to a unit eigenvector for the maximum eigenvalue of $c$ gives the result.  
The lower bound is obtained analogously: 
\[
v^{\R{T}} g^{\R{T}} q^{-1} g v \geq \lambda_{\min}(q^{-1}) \|gv\|^2 \geq \frac{\sigma^2_{\min}(g)}{\lambda_{\max}(q)} \|v\|^2.
\]
Applying this inequality to a unit eigenvector for the minimum eigenvalue of $c$ completes the proof. 
\end{proof}

From this theorem, we get a simple bound on the condition number of $\kappa(B)$:
\begin{equation}
\label{cCondition}
\kappa(E) = \frac{\lambda_{\max}(E)}{\lambda_{\min}(E)} 
\leq 
\frac{\lambda_{\max}(q)\sigma^2_{\max}(g)}
{\lambda_{\min}(q) \sigma^2_{\min}(g)}\;.
\end{equation}

Since we typically have bounds on the eigenvalues of $q$, 
all that remains is to characterize the singular values of $g$
in terms of the individual $g_k$. 
This is done in the next result which uses the relation
\begin{equation}
\label{explicitForm}
g^{\R{T}}g = \left( \begin{matrix}
I + g_2^\R{T}g_2     & g_2^\R{T}  & 0       & \cdots              \\
g_2     & I + g_3^{\R{T}}a_g        &         &           \vdots    \\
\vdots  &            & \ddots  &        g_N^{\R{T}}              \\
0       &            & g_{N} & I +  g_{N+1}^{\R{T}}g_{N+1} \\
\end{matrix} \right)
\end{equation}
where we define $g_{N+1}:= 0$, so that the bottom right entry is the identity matrix. 

\begin{theorem}
\label{singularTheorem}
The following bounds hold for the singular values of $g$:
\begin{equation}
\label{bigBound}
\begin{aligned}
\max\big(0, \min_{k} &\left\{ 1 + \sigma^2_{\min}(g_{k+1}) - \sigma_{\max}(g_k) - \sigma_{\max}(g_{k+1}) \right\}\big)\\
\leq & \quad \sigma^2_{\min}(g^{\R{T}}g)\quad  \leq  \quad \sigma^2_{\max}(g^{\R{T}}g) \quad \leq \\
\max_{k}&\left\{1 + \sigma_{\max}^2(g_{k+1}) +\sigma_{\max}(g_k) + \sigma_{\max}(g_{k+1})\right\}
\end{aligned}
\end{equation}
\end{theorem}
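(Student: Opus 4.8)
The plan is to bound the extreme values of the Rayleigh quotient $v^{\R{T}} g^{\R{T}} g\, v / \|v\|^2$ directly, since $g^{\R{T}}g$ is symmetric positive definite and the extreme values of this quotient are precisely the quantities $\sigma^2_{\min}(g^{\R{T}}g)$ and $\sigma^2_{\max}(g^{\R{T}}g)$ appearing in~\eqref{bigBound}. Writing $v = \R{vec}(\{v_k\})$ and expanding the quadratic form against the explicit block structure in~\eqref{explicitForm}, I would obtain
\[
v^{\R{T}} g^{\R{T}} g\, v = \sum_{k=1}^N \|v_k\|^2 + \sum_{k=1}^{N-1}\|g_{k+1}v_k\|^2 + 2\sum_{k=1}^{N-1} v_{k+1}^{\R{T}} g_{k+1} v_k ,
\]
where the three groups come respectively from the identity part of the diagonal blocks, the $g_{k+1}^{\R{T}}g_{k+1}$ part of the diagonal blocks, and the off-diagonal blocks $g_{k+1}$ and $g_{k+1}^{\R{T}}$, using the convention $g_{N+1}=0$ to absorb the boundary term.

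The single inequality that drives both bounds is the Cauchy--Schwarz/AM--GM estimate $|2\, v_{k+1}^{\R{T}} g_{k+1} v_k| \le \sigma_{\max}(g_{k+1})\left(\|v_k\|^2 + \|v_{k+1}\|^2\right)$ for each cross term. For the upper bound I would combine this with $\|g_{k+1}v_k\|^2 \le \sigma^2_{\max}(g_{k+1})\|v_k\|^2$; for the lower bound I would use the sign-reversed cross-term estimate together with $\|g_{k+1}v_k\|^2 \ge \sigma^2_{\min}(g_{k+1})\|v_k\|^2$. In each case the quadratic form is reduced to a pure diagonal form $\sum_k c_k \|v_k\|^2$, so that bounding every coefficient $c_k$ by its maximum (respectively minimum) over $k$ yields a bound proportional to $\|v\|^2$; evaluating at the appropriate extremal unit eigenvector then shows these bounds are consistent with the Rayleigh quotient and gives the stated inequalities.

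The only genuinely delicate step is the index bookkeeping when collecting the coefficient $c_k$ of each $\|v_k\|^2$: the term $\|v_k\|^2$ receives a $1$ from the identity, a $\sigma^2_{\max}(g_{k+1})$ (resp. $\sigma^2_{\min}(g_{k+1})$) from the quadratic diagonal term $g_{k+1}^{\R{T}}g_{k+1}$, and two distinct cross-term contributions — one of size $\sigma_{\max}(g_k)$ from the pair $(k-1,k)$ and one of size $\sigma_{\max}(g_{k+1})$ from the pair $(k,k+1)$. Tracking these carefully, together with the boundary conventions that $g_1$ is absent at $k=1$ and $g_{N+1}=0$ at $k=N$, produces exactly $c_k = 1 + \sigma^2_{\max}(g_{k+1}) + \sigma_{\max}(g_k) + \sigma_{\max}(g_{k+1})$ for the upper bound and $c_k = 1 + \sigma^2_{\min}(g_{k+1}) - \sigma_{\max}(g_k) - \sigma_{\max}(g_{k+1})$ for the lower bound. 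Finally, since $\sigma^2_{\min}(g^{\R{T}}g)\ge 0$ holds trivially, the lower estimate is sharpened to $\max\!\big(0,\, \min_k c_k\big)$, matching~\eqref{bigBound}.
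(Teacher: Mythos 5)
Your proof is correct, but it takes a genuinely different route from the paper's. The paper argues through the eigen-equation: it takes an eigenvector $v$ of $g^{\R{T}}g$, selects the block $v_k$ of largest norm, multiplies the $k$th block row of $g^{\R{T}}g v = \lambda v$ by $v_k^{\R{T}}$, and bounds the two cross terms using $\|v_{k\pm 1}\|/\|v_k\|\le 1$ — a block Gershgorin-style localization argument. You instead bound the full quadratic form $v^{\R{T}}g^{\R{T}}g\,v$ for an arbitrary $v$, splitting each cross term by Cauchy--Schwarz and AM--GM, $|2\,v_{k+1}^{\R{T}}g_{k+1}v_k| \le \sigma_{\max}(g_{k+1})\left(\|v_k\|^2+\|v_{k+1}\|^2\right)$, and then collecting per-block coefficients; the bounds follow from the variational (Rayleigh--Ritz) characterization without ever invoking an eigenvector or a largest-block assumption. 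Your version is arguably more robust and self-contained, and your index bookkeeping (including the boundary conventions at $k=1$ and $k=N$) is handled correctly; note that dropping the absent $g_1$ term only moves the coefficients in the favorable direction relative to the stated bounds. What the paper's localized argument buys, and yours does not directly give, is Corollary~\ref{eigenvectorSpecific}: because the paper fixes attention on the block where the minimizing eigenvector is largest, it obtains the lower bound with that \emph{specific} index $k$ rather than the minimum over all $k$, and this sharper, localized statement is what drives the later discussion of the last block $g_N$ as the weakest link and the numerical example of Section~\ref{sec:Numerics}. If you wanted to recover that corollary from your approach, you would have to reintroduce exactly the paper's largest-block eigenvector argument.
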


\begin{proof}
Let $v = \R{vec}(\{v_1, \dots, v_N\})$ be any eigenvector of $g^{\R{T}}g$, so that
\begin{equation}
\label{eigenDef}
g^{\R{T}}g v = \lambda v\;.
\end{equation}
Without loss of generality, suppose that the $v_k$ component has largest norm out of $[1, \dots, N]$. 
Then from the $k$th block of~\eqref{eigenDef}, we get 
\begin{equation}
\label{eigenBlock}
g_{k}v_{k-1} + (I + g_{k+1}^{\R{T}}g_{k+1})v_k + g_{k+1}^{\R{T}}v_{k+1} = \lambda v_k\;.
\end{equation}
Let $u_k = \frac{v_k}{\|v_k\|}$. 
Multiplying~\eqref{eigenBlock} on the left by $v_k^{\R{T}}$, dividing  
by $\|v_k\|^2$, and rearranging terms, we get
\begin{equation}
\label{eigenNorm}
\begin{aligned}
1 + u_kg_{k+1}^{\R{T}}g_{k+1} u_k  - \lambda & = -u_kg_{k}\frac{v_{k-1}}{\|v_k\|} - u_kg_{k+1}^{\R{T}}\frac{v_{k+1}}{\|v_k\|}\\
& \leq \sigma_{\max} (g_k) + \sigma_{\max}(g_{k+1})\;.
\end{aligned}
\end{equation}
This relationships in \eqref{eigenNorm} yield the upper bound
\begin{equation}
\label{upperbound}
\lambda \leq 1 + \sigma_{\max}^2(g_{k+1}) +\sigma_{\max}(g_k) + \sigma_{\max}(g_{k+1})
\end{equation}
and the lower bound 
\begin{equation}
\label{lowerBound}
\begin{aligned}
\lambda &\geq 1 + u_kg_{k+1}^{\R{T}}g_{k+1} u_k - \sigma_{\max}(g_k) - \sigma_{\max}(g_{k+1})\\
& \geq 1 + \sigma^2_{\min}(g_{k+1}) - \sigma_{\max}(g_k) - \sigma_{\max}(g_{k+1})\;.
\end{aligned}
\end{equation}
Taking the minimum over $k$ in the lower bound and maximum over $k$ for the upper bound
completes the proof. The expression $\max(0, \cdots)$ in~\eqref{bigBound} arises since the singular 
values are nonnegative. 
\end{proof}

\begin{corollary}
\label{eigenvectorSpecific}
Let $v^{\min}$ be the eigenvector corresponding to $\lambda_{\min}(g^{\R{T}}g)$,
and suppose that $\|v^{\min}_k\|$ is the component with the largest norm. Then 
we have the lower bound
\begin{equation}
\label{simpleLower}
\lambda_{\min}(g^{\R{T}}g) \geq 1 + \sigma^2_{\min}(g_{k+1}) -\sigma_{\max}(g_k) - \sigma_{\max}(g_{k+1})\;.
\end{equation}
In particular, since $g_{N+1} = 0$,
\begin{equation}
\label{lastLower}
\lambda_{\min}(g^{\R{T}}g) \geq 1 -\sigma_{\max}(g_N)\;.
\end{equation}
\end{corollary}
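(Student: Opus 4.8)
The plan is to observe that the first bound \eqref{simpleLower} is already contained in the proof of Theorem~\ref{singularTheorem}, before the minimization over $k$ is carried out. That argument fixes an eigenpair together with the block index $k$ at which the eigenvector attains its largest-norm component, and then derives \eqref{lowerBound} for exactly that $k$. So I would simply apply the entire chain \eqref{eigenBlock}--\eqref{lowerBound} to the specific eigenpair $(\lambda_{\min}(g^{\R{T}}g), v^{\min})$ and to the index $k$ singled out in the hypothesis. No minimization is taken, and \eqref{simpleLower} drops out verbatim.

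For the second bound \eqref{lastLower}, I would specialize \eqref{simpleLower} to the terminal index $k = N$. Since $g_{N+1} := 0$ by the convention fixed before \eqref{explicitForm}, we have $\sigma_{\min}(g_{N+1}) = \sigma_{\max}(g_{N+1}) = 0$, so the right-hand side of \eqref{simpleLower} collapses to $1 - \sigma_{\max}(g_N)$. Equivalently, one can run the short direct computation: the last block of \eqref{eigenDef} reads $g_N v_{N-1} + v_N = \lambda v_N$, because there is no $v_{N+1}$ term and $g_{N+1}^{\R{T}}g_{N+1}$ vanishes; multiplying on the left by $v_N^{\R{T}}$, normalizing by $\|v_N\|^2$, and using $\|v_{N-1}\| \leq \|v_N\|$ (as block $N$ has the largest norm) yields $\lambda \geq 1 - \sigma_{\max}(g_N)$.

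The one point that needs care, and the only real obstacle, is that \eqref{lastLower} is the specialization of \eqref{simpleLower} at $k = N$, and therefore rests on the largest-norm component of $v^{\min}$ occurring in the terminal block. I would state this hypothesis explicitly. When the dominant block is interior ($k \neq N$), the clean bound one can always assert instead is obtained by discarding the nonnegative term $\sigma^2_{\min}(g_{k+1})$ in \eqref{simpleLower}, giving $\lambda_{\min}(g^{\R{T}}g) \geq 1 - \sigma_{\max}(g_k) - \sigma_{\max}(g_{k+1})$. I would flag that \eqref{lastLower} is the sharper form available precisely because the terminal diagonal block of $g^{\R{T}}g$ in \eqref{explicitForm} equals the identity, so no $g_{N+1}$ contribution degrades the estimate.
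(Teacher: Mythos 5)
Your proof is correct and takes essentially the same route as the paper: the paper gives no separate argument for this corollary, because it is read off from the proof of Theorem~\ref{singularTheorem} by applying the chain \eqref{eigenBlock}--\eqref{lowerBound} to the specific eigenpair $\left(\lambda_{\min}(g^{\R{T}}g),\, v^{\min}\right)$ and skipping the final minimization over $k$, exactly as you propose. Your caveat is also well taken: \eqref{lastLower} is the $k=N$ specialization of \eqref{simpleLower} and therefore tacitly assumes the largest-norm block of $v^{\min}$ is the terminal one (the reading the paper's subsequent discussion and numerical example rely on), so making that hypothesis explicit is a legitimate sharpening rather than a deviation.
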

\smallskip
%
The bound~\ref{lastLower} reveals the vulnerability of the system $g^{\R{T}}g$ 
to the behavior of the last component.

For Kalman smoothing problems, the matrix $g^{\R{T}}q^{-1}g$
corresponds to $G^{\R{T}}Q^{-1}G$.
The components $g_k$ correspond to the process models $G_k$. 
These components are often {\it identical} for all $k$, 
or they are all constructed from ODE discretizations, so that their 
singular values are similarly behaved across $k$. 
By~\eqref{simpleLower}, we see that
the last component emerges as the weakest link, since regardless of how well 
the $G_k$ are behaved for $k = 2, \dots, N-1$, the condition number
can go to infinity if any singular value for $G_N$ is larger than $1$. 
Therefore, to guard against instability, one must address instability in the final
component $G_N$.

\section{Forward Block Tridiagonal (FBT) Algorithm and the RTS smoother}
\label{sec:forward}
We now present the FBT algorithm.
Suppose for \( k = 1 , \ldots , N \),
\( b_k \in \B{R}^{n \times n} \),
\( e_k \in \B{R}^{n \times \ell} \),
\( r_k \in \B{B}^{n \times \ell} \),
and for \( k = 2 , \ldots , N \),
\( c_k \in \B{R}^{n \times n} \). 
We define the corresponding block tridiagonal system of equations
\begin{equation}
\label{BlockTridiagonalEquation}
\small
\left( \begin{matrix}
b_1     & c_2^\R{T}  & 0       & \cdots        & 0      \\
c_2     & b_2        &         &               & \vdots \\
\vdots  &            & \ddots  &               & 0         \\
0       &            & c_{N-1} & b_{N-1}       & c_N^\R{T} \\
0       & \cdots     & 0       & c_N           & b_N 
\end{matrix} \right)
\left( \begin{array}{c} 
	e_1 \\ \rule{0em}{1.5em} e_2 \\ \vdots \\ e_{N-1} \\ e_N
\end{array} \right)
=
\left( \begin{matrix} 
	r_1 \\ \rule{0em}{1.5em} r_2 \\ \vdots \\  r_{N-1} \\ r_N
\end{matrix} \right)
\end{equation}

For positive definite systems, the FBT algorithm
is defined as follows \cite[algorithm 4]{Bell2000}:
\begin{algorithm}[Forward Block Tridiagonal (FBT)]
\label{ForwardAlgorithm}
The inputs to this algorithm are 
\( \{ c_k \}_{k=2}^N \),
\( \{ b_k \}_{k=1}^N \),
and
\( \{ r_k \}_{k=1}^N \) where each
\( c_k \in \B{R}^{n \times n} \),
\( b_k \in \B{R}^{n \times n} \), and
\( r_k \in \B{R}^{n \times \ell} \).
The output is the sequence \( \{ e_k \}_{k=1}^N \) that solves equation~(\ref{BlockTridiagonalEquation}),
with each \( e_k \in \B{R}^{n \times \ell} \). 
\end{algorithm}
\begin{enumerate}

\item
\T{Set} \( d_1^f = b_1  \) and \( s_1^f = r_1 \).

\T{For} \( k = 2 \) \T{To} \( N \) \T{:}

\begin{itemize}
\item
\T{Set} \( d_k^f = b_k - c_{k} (d_{k-1}^f)^{-1} c_{k}^\R{T} \).
\item
\T{Set} \( s_k^f = r_k - c_{k} (d_{k-1}^f)^{-1} s_{k-1} \).
\end{itemize}

\item
\T{Set} \( e_N = (d_N^f)^{-1} s_N \).

\T{For} \( k = N-1 \) \T{To} \( 1 \) \T{:}
\begin{itemize}
\item
\T{Set} \( e_k = (d_k^f)^{-1} ( s_k^f - c_{k+1}^\R{T} e_{k+1} ) \).
\end{itemize}


\end{enumerate}
Before we discuss stability results for this algorithm (see theorem~\ref{algorithmStability}), 
we prove that the RTS smoother is an implementation 
of this algorithm for matrix $C$ in~\eqref{hessianApprox}.

\begin{theorem}
\label{ThomasRTS}
When applied to $C$ in~\eqref{hessianApprox} with $r= H^\top R^{-1}z + G^\top Q^{-1}\zeta$, 
Algorithm~\ref{ForwardAlgorithm} is equivalent to
 the RTS \cite{RTS} smoother. 
\end{theorem}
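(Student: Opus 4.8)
The plan is to establish the equivalence by explicitly matching the intermediate quantities generated by the FBT recursion with the standard RTS recursion quantities. The key observation is that both algorithms perform a forward sweep followed by a backward sweep, so the proof naturally splits into matching these two phases.

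First I would identify the RTS recursion in a form amenable to comparison. The forward pass of RTS is the Kalman filter, which produces filtered state estimates and their error covariances via time-update and measurement-update steps; the backward pass then refines these into smoothed estimates. I would write these standard recursions explicitly. In parallel, I would instantiate the FBT algorithm on the specific blocks $b_k = D_k$, $c_k = A_k$ from~\eqref{KalmanData}, with $A_k = -Q_k^{-1}G_k$ and $D_k = Q_k^{-1} + G_{k+1}^\top Q_{k+1}^{-1}G_{k+1} + H_k^\top R_k^{-1}H_k$, and trace through the meaning of the forward quantities $d_k^f$ and $s_k^f$.

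The heart of the argument, and the main obstacle, is showing by induction on $k$ that the FBT forward pivot $d_k^f$ equals the inverse of the Kalman filter error covariance (i.e.\ $d_k^f = P_{k|k}^{-1}$ up to the appropriate additive measurement term), and that $s_k^f$ corresponds to the information-form filtered estimate $P_{k|k}^{-1}\hat{x}_{k|k}$. The induction step requires applying the matrix inversion lemma (Sherman--Morrison--Woodbury) to convert the FBT update $d_k^f = D_k - A_k (d_{k-1}^f)^{-1} A_k^\top$ into the covariance-form Riccati update used by the Kalman filter; substituting $A_k = -Q_k^{-1}G_k$ and expanding $D_k$ is where the algebra is densest, since the $G_{k+1}^\top Q_{k+1}^{-1}G_{k+1}$ term must be correctly absorbed into the next step's pivot. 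I expect this Woodbury manipulation to be the technically delicate part, because one must carefully track how the forward-model and measurement contributions split across consecutive pivots.

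Once the forward pass is matched, the backward pass is comparatively routine: the FBT back-substitution $e_k = (d_k^f)^{-1}(s_k^f - c_{k+1}^\top e_{k+1})$ must be shown to reproduce the RTS smoothing recursion that corrects the filtered estimate using the smoothing gain. Since $e_N = (d_N^f)^{-1}s_N$ already equals the final filtered estimate $\hat{x}_{N|N}$ (which coincides with the smoothed estimate at the endpoint), and the backward correction term $c_{k+1}^\top = A_{k+1}^\top = -G_{k+1}^\top Q_{k+1}^{-1}$ encodes exactly the coupling that the RTS smoothing gain introduces, the induction backward from $k=N$ should close the equivalence directly. I would conclude by noting that since both algorithms solve the same linear system~\eqref{smoothingSol}, the solution vectors agree; the content of the theorem is that the \emph{intermediate} quantities coincide, which is what the inductive matching delivers.
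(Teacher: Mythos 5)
Your plan matches the paper's proof essentially step for step: both arguments identify the forward FBT quantities with information-filter quantities, namely $d_k^f = P_{k|k}^{-1} + G_{k+1}^\top Q_{k+1}^{-1}G_{k+1}$ and $s_k^f = P_{k|k}^{-1}x_{k|k}$, via the Woodbury identity, and then show the back-substitution reproduces the RTS correction built from the smoothing gain $C_k = P_{k|k}G_{k+1}^\top P_{k+1|k}^{-1}$, starting from $e_N = x_{N|N}$. Two small notes: the additive term you call a ``measurement term'' in $d_k^f$ is actually the next-step process term $G_{k+1}^\top Q_{k+1}^{-1}G_{k+1}$ (which you do name correctly afterwards), and the paper applies Woodbury in the backward pass as well, so that step is no more routine than the forward one.
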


\begin{proof}
Looking at the very first block, we now substitute in the Kalman data structures~\eqref{KalmanData}
into step 1 of Algorithm~\ref{ForwardAlgorithm}.  
Understanding this step requires introducing some structures which may be familiar to the reader
from Kalman filtering literature. 
\begin{equation}\label{Equivalence}
\small
\begin{aligned}
P_{1|1}^{-1} & := Q_1^{-1} + H_1^\top R_1^{-1}H_1\\
P_{2|1}^{-1} & :=  (G_1P_{1|1}G_1^{\R{T}} + Q_2)^{-1} \\
&= Q_2^{-1}   - \left(Q_2^{-1}G_{2}\right)^\top\left(P_{1|1}^{-1} + 
G_{2}^\top Q^{-1}_{2}G_{2} \right )^{-1} \left(Q_2^{-1}G_{2}\right)\\
P_{2|2}^{-1} & := P_{2|1}^{-1} + H_2^\top R_2^{-1}H_2\\
d_2^f &= b_2 - c_{2}^\R{T} (d_{1}^f)^{-1} c_{2} = P_{2|2}^{-1}+ G_{3}^\top Q^{-1}_{3}G_{3}
\end{aligned}
\end{equation}
These relationships can be seen quickly from~\cite[Theorem 2.2.7]{AravkinThesis2010}. 
The matrices $P_{k|k}$, $P_{k|k-1}$ often appear in Kalman literature: they represent 
covariances of the state at time $k$ given the the measurements $\{z_1, \dots, z_k\}$, and the covariance of the 
a priori state estimate at time $k$ given measurements $\{z_1, \dots, z_{k-1}\}$, respectively. 

The key fact from~\ref{Equivalence} is that
\[
d_2^f = P_{2|2}^{-1} +G_{3}^\top Q^{-1}_{3}G_{3}\;. 
\]
Using the same computation for the generic tuple $(k,k+1)$ 
rather than $(1,2)$ establishes
\begin{equation}
\label{dDef}
d_k^f = P_{k|k}^{-1} +G_{k+1}^\top Q^{-1}_{k+1}G_{k+1}\;. 
\end{equation}
We now apply this technique to the right hand side of~\eqref{smoothingSol}, 
$r =  H^\top R^{-1}z + G^\top Q^{-1}\zeta$.  We have
\begin{equation}\label{EquivalenceRHS}
\small
\begin{aligned}
y_{2|1} & :=  \left(Q_2^{-1}G_{2}\right)^\top\left(P_{1|1}^{-1} + G_{2}^\top Q^{-1}_{2}G_{2} \right )^{-1}
\left(H_1^\top R_1^{-1}z_1 + G_1^\top P_{0|0}^{-1}x_0 \right)\\
y_{2|2} & := H_2^\top R_2^{-1}z_2 + y_{2|1} \\
s_2^f &= r_2 - c_{2}^\R{T} (d_{1}^f)^{-1} r_{1} = y_{2|2} 
\end{aligned}
\end{equation}
These relationships also follow from~\cite[Theorem 2.2.7]{AravkinThesis2010}. 
The quantities $y_{2|1}$ and $y_{2|2}$ may be familiar to the reader from the information filtering literature: 
they are preconditioned estimates 
\begin{equation}
\label{infoStructures}
\begin{aligned}
y_{k|k} &= P_{k|k}^{-1}x_{k|k}\;, \quad y_{k|k-1} & = P_{k|k-1}^{-1}x_{k|k-1}\;,
\end{aligned}
\end{equation}
where $x_{k|k}$ is the estimate of $x_k$ given $\{z_1, \dots, z_k\}$
and 
\[
x_{k|k-1} = G_k x_{k-1|k-1}
\] 
is the best prediction of the 
state $x_k$ given $\{z_1, \dots, z_{k-1}\}$.

Applying the computation to a generic index $k$, we have  $s_k^f = y_{k|k}$. 
From these results, it immediately follows that $e_N$ computed in step 2 of Algorithm~\ref{ForwardAlgorithm} 
is the Kalman filter estimate (and the RTS smoother estimate) for time point $N$ (see~\eqref{dDef}):
\begin{equation}\label{KalmanFilter}
\begin{aligned}
e_N &= (d_N^f)^{-1}s_N^f  
 = \left(P_{N|N}^{-1} + 0 \right)^{-1} P_{N|N}^{-1}x_{N|N}
 = x_{N|N}\;.
\end{aligned}
\end{equation}
\smallskip

We now establish the iteration in step 2 of Algorithm~\ref{ForwardAlgorithm}. 
First, following~\cite[(3.29)]{RTS},  
we define 
\begin{equation}
\label{Ck}
C_k = P_{k|k}G_{k+1}^{\R{T}}P_{k+1|k}^{-1}\;.
\end{equation}
\smallskip

To save space, we also use shorthand 
\begin{equation}
\label{short}
\hat P_k := P_{k|k}, \quad \hat x_{k}:= x_{k|k}\;.
\end{equation}

At the first step, we obtain
\begin{equation}\label{backwardFilter}
\begin{aligned}
e_{N-1} &= (d_{N-1}^f)^{-1}(s_{N-1}^f - c_{N}^{\R{T}}e_{N})\\
&=(\hat P_{N-1}^{-1} + G_N^{\R{T}}Q_N^{-1}G_N)^{-1}(\hat P^{-1}_{N-1}\hat x_{N-1} - G_N^{\R{T}}Q_{N}^{-1} \hat x_{N})\\
&=(\hat P_{N-1}^{-1} + G_N^{\R{T}}Q_N^{-1}G_N)^{-1}\hat P^{-1}_{N-1}\hat x_{N-1} - C_{N-1} \hat x_{N}\\
&=\hat x_{N-1} - C_{N-1}(G_n x_{N-1} - \hat x_N)\\ 
& = x_{N-1|N-1} + C_{N-1}(x_{N|N}- G_Nx_{N-1|N-1})\;,
\end{aligned}
\end{equation}
where the Woodbury inversion formula was used to get from line $3$ to line $4$. 
Comparing this to~\cite[(3.28)]{RTS}, we find that $e_{N-1} = x_{N-1|N}$, i.e. 
the RTS smoothed estimate. The computations above, when applied to the general tuple
$(k, k+1)$ instead of $(N-1, N)$, show that every $e_k$ is equivalent to $x_{k|N}$, 
which completes the proof. 
\end{proof}

In 1965, Rauch, Tung and Striebel showed that their smoother solves the maximum likelihood  
problem for $\B{p}(\{x_k\} |\{z_k\})$ \cite{RTS}, which is equivalent to~\eqref{logMAP}. 
Theorem~\ref{ThomasRTS} adds to this understanding, showing that RTS smoother
is {\it precisely} the FBT algorithm. Moreover, the proof explicitly shows how the
Kalman filter estimates $x_{k|k}$ can be obtained as the FBT
proceeds to solve system~\eqref{smoothingSol}. For efficiency, we did not
include any expressions related to the Kalman gain; the interested reader can find
these relationships in~\cite[Chapter 2]{AravkinThesis2010}. 

We now turn our attention to the stability of the forward algorithm 
for block tridiagonal systems. One such analysis appears in~\cite[Lemma 6]{Bell2000}, 
and has been used frequently to justify the Thomas algorithm as the method of choice
in many Kalman smoothing applications.  
Below, we review this result, and show that it has a critical flaw precisely for 
Kalman smoothing systems. 

\begin{lemma}\cite[Lemma 6]{Bell2000}
\label{ForwardLemma}
Suppose we are given sequences 
\( \{ q_k \}_{k=0}^N \),
\( \{ g_k \}_{k=2}^N \), and 
\( \{ u_k \}_{k=1}^N \), where 
each \( q_k \in \B{R}^{n \times n} \) is symmetric postive definite,
each \( u_k \in \B{R}^{n \times n} \) is symmetric postive semidefinite,
(in the Kalman context, $u_k$ corresponds to $H_k^T R_k^{-1}H_k$)
each \( g_k \in \B{R}^{n \times n} \).
Define 
\( b_k \in \B{R}^{n \times n} \) by
\[
b_k = u_k + q_{k}^{-1} + g_{k+1}^\R{T} q_{k+1}^{-1} g_{k+1}
\; , \; \R{where} \;
k = 1, \ldots , N 
\]
Define 
\( c_k \in \B{R}^{n \times n} \) by
\[
c_k = q_{k}^{-1} g_{k} 
\; , \; \R{where} \;
k = 2 , \ldots , N 
\]
Suppose there is an \( \alpha > 0 \) such that
all the eigenvalues of \( g_k^\R{T} q_k^{-1} g_k \) 
are greater than or equal \( \alpha \), $k=1,\dots,N$.
Suppose there is a \( \beta > 0 \) such that
all the eigenvalues of \( b_k \)
are less than or equal \( \beta \), $k=1,\dots,N$.
Further, suppose we execute
Algorithm~\ref{ForwardAlgorithm} with corresponding input sequences
\( \{ b_k \}_{k=1}^N \) and \( \{ c_k \}_{k=2}^N \).
It follows that each $d_k$ generated by the algorithm
is symmetric positive definite
and has condition number less than or equal $\beta / \alpha$. 
\end{lemma}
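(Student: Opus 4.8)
The plan is to prove the two-sided operator inequality $\alpha I \preceq d_k^f \preceq \beta I$ for every $k$, where $d_k^f$ denotes the pivot block produced in step~1 of Algorithm~\ref{ForwardAlgorithm}; positive definiteness and the bound $\kappa(d_k^f) \le \beta/\alpha$ then follow at once from $\lambda_{\min}(d_k^f) \ge \alpha > 0$ and $\lambda_{\max}(d_k^f) \le \beta$. I would first dispose of the upper bound, which is essentially free. Since $d_1^f = b_1 \preceq \beta I$ and, inductively, $c_{k}(d_{k-1}^f)^{-1}c_{k}^\R{T} \succeq 0$ whenever $d_{k-1}^f \succ 0$, the recursion gives $d_k^f = b_k - c_{k}(d_{k-1}^f)^{-1}c_{k}^\R{T} \preceq b_k \preceq \beta I$. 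Thus each Schur complement only shrinks relative to its diagonal block, and the $\beta$ bound propagates for free; the positive definiteness of $d_1^f,\dots,d_{N-1}^f$ needed here will be supplied by the lower bound below, so there is no circularity.

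The substance is the lower bound. Here I would \emph{not} attempt $d_k^f \succeq \alpha I$ directly, but instead carry the sharper inductive claim $d_k^f \succeq g_{k+1}^\R{T} q_{k+1}^{-1} g_{k+1}$, which both implies the desired bound (via the hypothesis $g_{k+1}^\R{T} q_{k+1}^{-1} g_{k+1} \succeq \alpha I$) and is exactly strong enough to feed the next step. The engine is the elementary inequality: if $M \succeq g_k^\R{T} q_k^{-1} g_k \succ 0$, then $q_k^{-1} - q_k^{-1} g_k M^{-1} g_k^\R{T} q_k^{-1} \succeq 0$. I would prove this by setting $W = q_k^{-1/2} g_k$, so that $g_k^\R{T} q_k^{-1} g_k = W^\R{T} W$ and $c_k = q_k^{-1} g_k = q_k^{-1/2} W$; then $M \succeq W^\R{T}W$ gives $M^{-1} \preceq (W^\R{T}W)^{-1}$, hence $W M^{-1} W^\R{T} \preceq W(W^\R{T}W)^{-1}W^\R{T} \preceq I$ because the middle term is an orthogonal projector, and a congruence by $q_k^{-1/2}$ yields $c_k M^{-1} c_k^\R{T} \preceq q_k^{-1}$, which is the claim. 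Note this is where the hypothesis $g_k^\R{T} q_k^{-1} g_k \succeq \alpha I \succ 0$ is used, guaranteeing $W^\R{T}W$ is invertible.

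With this engine the induction runs cleanly. The base case is $d_1^f = u_1 + q_1^{-1} + g_2^\R{T} q_2^{-1} g_2 \succeq g_2^\R{T} q_2^{-1} g_2$. For the step, the hypothesis $d_{k-1}^f \succeq g_k^\R{T} q_k^{-1} g_k$ (which is precisely what the previous step delivered) lets me apply the inequality with $M = d_{k-1}^f$, so that $q_k^{-1} - c_{k}(d_{k-1}^f)^{-1}c_{k}^\R{T} \succeq 0$; substituting $b_k = u_k + q_k^{-1} + g_{k+1}^\R{T} q_{k+1}^{-1} g_{k+1}$ into the recursion gives $d_k^f \succeq u_k + g_{k+1}^\R{T} q_{k+1}^{-1} g_{k+1} \succeq g_{k+1}^\R{T} q_{k+1}^{-1} g_{k+1}$, closing the induction and, for $k \le N-1$, delivering $d_k^f \succeq \alpha I$.

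The hard part — and the place I would expect the stated result to be in jeopardy — is the final block $d_N^f$. The inductive lower bound produces $d_N^f \succeq g_{N+1}^\R{T} q_{N+1}^{-1} g_{N+1}$, but here $g_{N+1} = 0$, so it degrades to $d_N^f \succeq u_N$, and $u_N$ (in the Kalman setting $H_N^\R{T} R_N^{-1} H_N$) may be singular. Nothing in the hypotheses forces $\lambda_{\min}(d_N^f) \ge \alpha$, so I would anticipate that the claimed uniform condition-number bound genuinely fails at $k = N$ — exactly the ``weakest link'' phenomenon flagged in Corollary~\ref{eigenvectorSpecific}, where $g_{N+1}=0$ removes the stabilizing term from the last diagonal block. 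This is the single step I would scrutinize most carefully, and I would not be surprised if it is where the cited analysis breaks down.
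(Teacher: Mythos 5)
The paper never proves this lemma: it is quoted from \cite{Bell2000} precisely so that the discussion immediately following it can show its hypotheses are unsatisfiable in the Kalman smoothing setting. So there is no in-paper proof to compare against, and the right benchmark is (a) whether your argument is sound and (b) whether you found the flaw the paper is driving at. On (a), your proof is correct: the upper bound $d_k^f \preceq b_k \preceq \beta I$ is free once positive definiteness is in hand, and your inductive invariant $d_k^f \succeq g_{k+1}^{\R{T}} q_{k+1}^{-1} g_{k+1}$, driven by the engine ``$M \succeq g_k^{\R{T}} q_k^{-1} g_k \succ 0$ implies $c_k M^{-1} c_k^{\R{T}} \preceq q_k^{-1}$'' (operator anti-monotonicity of inversion plus the projector bound $W(W^{\R{T}}W)^{-1}W^{\R{T}} \preceq I$), is valid; note it uses the hypothesis $g_k^{\R{T}} q_k^{-1} g_k \succeq \alpha I \succ 0$ exactly where needed, to make $W^{\R{T}}W$ invertible. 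This is, incidentally, the forward-recursion analogue of the technique the paper itself uses later for the backward recursion in Theorem~\ref{InductionRelationTheorem}, whose invariant is $d_k^b - q_k^{-1} \succeq 0$; your invariant plays the same role for FBT.

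On (b), your closing diagnosis is precisely the paper's critique. The conclusion at $k=N$ can only be salvaged if the hypothesis ``eigenvalues of $g_k^{\R{T}} q_k^{-1} g_k \geq \alpha$'' covers the term $g_{N+1}^{\R{T}} q_{N+1}^{-1} g_{N+1}$ appearing in $b_N$; but in the Kalman context there is no next state at $k=N$, so $g_{N+1} = G_{N+1} = 0$, forcing $\alpha = 0$ and contradicting $\alpha > 0$ --- equivalently, as you put it, the lower bound degrades to $d_N^f \succeq u_N = H_N^{\R{T}} R_N^{-1} H_N$, which may be singular. This is exactly the ``critical flaw'' the paper identifies after stating the lemma, and it is what motivates the paper's replacement result, Theorem~\ref{algorithmStability}, which bounds the blocks $d_k^f$ by the eigenvalue bounds of the \emph{full} system instead of by blockwise hypotheses. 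In short: your proposal supplies the proof the paper omits and independently reproduces its critique; there is no gap.
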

\smallskip

To understand what can go wrong with this analysis, 
consider $b_N$ in the Kalman smoothing context
where the corresponding matrix entries are given by 
\cite[equations (12) and (13)]{Bell2000}.
The matrix \( b_N \) in Lemma~\ref{ForwardLemma} is given by
\[
b_N = u_N + q_{N}^{-1} + g_{N+1}^\R{T} q_{N+1}^{-1} g_{N+1}
\] 
where the correspondence to \cite[(13)]{Bell2000} is given by
\( b_N \) corresponds to \( B_N \),
\( q_{N} \) corresponds to \( Q_{N} \),
\( g_N \) corresponds to \( G_N\), and
\( u_N \) corresponds to
\[
H_N^{\R{T}} R_N^{-1} H_N\;.
\]
In the context of \cite{Bell2000},
\( G_{k+1} \) is the model for the next state vector
and at \( k = N \) there is no next state vector.
Hence, \( G_{N+1} = 0 \).
Thus, in the context of Lemma~\ref{ForwardLemma}, \( a_N  = 0 \) 
and hence \( \alpha = 0 \) which contradicts the Lemma assumptions.

Does this mean that the FBT algorithm 
(and hence the Kalman filter and RTS smoother) are inherently
unstable, unless they have measurements to stabilize them? 
This has been a concern in the literature; for example, 
Bierman~\cite{Bierman1983} suggests improved stability 
as a secondary motivation for his work. 
It turns out that this concern is not justified; 
in fact we can prove a powerful theorem that relates stability of the forward block 
tridiagonal algorithm to the stability of the system~\eqref{triSys}, 
already characterized in Theorem~\ref{simpleBounds}.

\begin{theorem}
\label{algorithmStability}
Consider any block tridiagonal system $A\in \mathbb{R}^{Nn}$ 
of form~\eqref{BlockTridiagonalEquation}. 
and suppose we are given a lower bound 
$\alpha_L$ and an upper bound $\alpha_U$ on 
the eigenvalues of this system:
\begin{equation}
\label{eigenvalueBoundsIII}
0 < \alpha_L \leq \lambda_{\min}(A) \leq \lambda_{\max} (A) \leq \alpha_U\;.
\end{equation}
If we apply the FBT iteration
\[
d_k^f = b_k - c_{k} (d_{k-1}^f)^{-1} c_{k}^\R{T}, 
\]
then  
\begin{equation}
\label{eigenvalueBlockBounds}
0 < \alpha_L \leq \lambda_{\min}(d_k) \leq \lambda_{\max} (d_k) \leq \alpha_U \quad \forall k\;.
\end{equation}
\end{theorem}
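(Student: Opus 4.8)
The plan is to recognize the FBT pivots $d_k^f$ as Schur complements of the leading principal submatrices of $A$, and then to bound their spectra through a single variational argument against the full matrix. Let $A_k$ denote the leading principal block submatrix of $A$ formed by its first $k$ block rows and columns, so that $A_N = A$, $A_1 = b_1 = d_1^f$, and (by Cauchy interlacing) every $A_k$ has its spectrum inside $[\alpha_L,\alpha_U]$.

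First I would establish, by induction on $k$, two intertwined facts: (a) $d_k^f$ equals the Schur complement of $A_{k-1}$ in $A_k$, and (b) the trailing (bottom-right) $n\times n$ block of $A_k^{-1}$ equals $(d_k^f)^{-1}$. The base case $k=1$ is immediate. For the inductive step I write $A_k = \begin{pmatrix} A_{k-1} & u \\ u^{\R{T}} & b_k \end{pmatrix}$ and exploit the block tridiagonal structure: the coupling column $u$ vanishes except for a trailing block equal to $c_k^{\R{T}}$. Hence $u^{\R{T}} A_{k-1}^{-1} u = c_k\,[A_{k-1}^{-1}]_{k-1,k-1}\,c_k^{\R{T}}$, and invoking (b) at index $k-1$ gives $[A_{k-1}^{-1}]_{k-1,k-1} = (d_{k-1}^f)^{-1}$, so the Schur complement of $A_{k-1}$ in $A_k$ is exactly $b_k - c_k (d_{k-1}^f)^{-1} c_k^{\R{T}} = d_k^f$, which is (a). Fact (b) at index $k$ then follows from (a) by the standard block-inverse formula. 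Since $A$ is positive definite (its eigenvalues exceed $\alpha_L > 0$), each $A_k$ and each of these Schur complements is positive definite, so every $d_k^f$ is symmetric positive definite and the iteration is well defined.

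Next I would use the energy-minimization characterization of the Schur complement. Because $d_k^f$ is the Schur complement of the leading block $A_{k-1}$ in $A_k$, for any $y \in \mathbb{R}^n$ we have $y^{\R{T}} d_k^f\, y = \min_{\xi}\, \begin{pmatrix} \xi \\ y \end{pmatrix}^{\R{T}} A_k \begin{pmatrix} \xi \\ y \end{pmatrix}$ over $\xi \in \mathbb{R}^{(k-1)n}$. Writing $w = \R{vec}(\{\xi_1,\dots,\xi_{k-1}, y, 0, \dots, 0\}) \in \mathbb{R}^{Nn}$ and using that $A_k$ is the leading principal submatrix while the block $k{+}1,\dots,N$ entries of $w$ vanish (so the coupling term involving $c_{k+1}$ drops out), this equals $\min\{ w^{\R{T}} A\, w : w \text{ supported on blocks } 1,\dots,k,\ \text{block } k \text{ of } w \text{ equal to } y \}$.

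Finally the bounds fall out directly. For the lower bound, $w^{\R{T}} A\, w \geq \lambda_{\min}(A)\|w\|^2 \geq \alpha_L\|y\|^2$ for every admissible $w$, since $\|w\|^2 \geq \|y\|^2$; taking the minimum gives $\lambda_{\min}(d_k^f) \geq \alpha_L$. For the upper bound, the admissible choice $\xi = 0$ yields $y^{\R{T}} d_k^f\, y \leq y^{\R{T}} b_k\, y \leq \lambda_{\max}(A)\|y\|^2 \leq \alpha_U\|y\|^2$, where the middle inequality is Cauchy interlacing applied to the principal submatrix $b_k$; hence $\lambda_{\max}(d_k^f) \leq \alpha_U$. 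I expect the main obstacle to be the inductive identification in the first step, namely verifying that the FBT recursion reproduces the Schur complement. This is precisely where the block tridiagonal sparsity is essential: for a dense $A$ the pivot would couple block $k$ to the entire trailing block of $A_{k-1}^{-1}$ rather than through the single block $c_k$, and the clean recursion $d_k^f = b_k - c_k (d_{k-1}^f)^{-1} c_k^{\R{T}}$ would fail.
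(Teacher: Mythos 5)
Your proof is correct, and it takes a genuinely different route from the paper's. You identify the FBT pivots $d_k^f$ as Schur complements of the leading principal submatrix $A_{k-1}$ inside $A_k$ (your two-part induction is sound, and the block tridiagonal sparsity is exactly what collapses $u^{\R{T}}A_{k-1}^{-1}u$ to $c_k(d_{k-1}^f)^{-1}c_k^{\R{T}}$), and then you bound both ends of the spectrum through the energy-minimization characterization $y^{\R{T}}d_k^f\,y=\min_{\xi}\,w^{\R{T}}A_k\,w$, where $w$ stacks $\xi$ over $y$: extending $w$ by zeros to $\mathbb{R}^{Nn}$ gives $y^{\R{T}}d_k^f\,y\geq\alpha_L\|y\|^2$, while the admissible choice $\xi=0$ gives $d_k^f\preceq b_k$ and hence the upper bound $\alpha_U$ by interlacing. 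The paper instead argues by contradiction with explicit elimination congruences: it forms $S_1AS_1^{\R{T}}$, which places $d_2^f$ on the diagonal, supposes $d_2^f$ has an eigenvalue below $\alpha_L$, and pulls the offending augmented eigenvector back through $S_1^{\R{T}}$ (which cannot shrink its norm) to contradict $\lambda_{\min}(A)\geq\alpha_L$, then iterates this reduction down the matrix. Both arguments rest on the same underlying algebra --- FBT is block Gaussian elimination --- but your version buys two things. First, it is direct rather than by contradiction, and it avoids the bookkeeping of iterated congruences (the paper's claim that the trailing block of $S_1AS_1^{\R{T}}$ inherits the bounds is precisely the point that needs care there). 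Second, and more substantively, it handles the upper bound correctly: the paper dismisses it as ``the same argument,'' yet the norm inequality $\|S_1^{\R{T}}\widetilde v\|\geq\|\widetilde v\|$ points the wrong way for an upper-bound contradiction; your $\xi=0$ step (equivalently, $d_k^f$ equals $b_k$ minus a positive semidefinite term, combined with $\lambda_{\max}(b_k)\leq\lambda_{\max}(A)$) closes that gap cleanly.
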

In other words, the FBT iteration preserves eigenvalue bounds (and hence the condition number)
for each block, and hence will be stable when the full system is well conditioned. 

\begin{proof}
For simplicity, we will focus only on the lower bound, since the same arguments 
apply for the upper bound. 
Note that $b_1 = d_1^f$, and 
the eigenvalues of $d_1^f$ must satisfy 
\[
\alpha_L \leq \lambda_{\min}(d_1^f)
\]
since otherwise we can produce a unit-norm eigenvector $v_1 \in \mathbb{R}^{n}$ of $d_1^f$ 
with $v_1^{\R{T}} d_1^f v_1 < \alpha_L$, and then form the augmented unit vector $\widetilde v_1 \in \mathbb{R}^N$
with $v_1$ in the first block, and every other entry $0$. Then we have 
\[
\widetilde v_1^{\R{T}} A \widetilde v_1 < \alpha L\;,
\]
which violates~\eqref{eigenvalueBounds}. 
Next, note that 
\begin{equation}
\label{OneStep}
\small
S_1
A
S_1^{\R{T}}
=
\left( \begin{matrix}
b_1     & 0  & 0       & \cdots        & 0      \\
0     & d_2^f        &         &               & \vdots \\
\vdots  &            & \ddots  &               & 0         \\
0       &            & c_{N-1} & b_{N-1}       & c_N^\R{T} \\
0       & \cdots     & 0       & c_N           & b_N 
\end{matrix} \right)
\end{equation}
where 
\[
d_2^f = b_2 - c_2(d_1^f)^{-1}c_2^{\R{T}}
\]
and
\[
S_1 = 
\left( \begin{matrix}
I     & 0  & 0       & \cdots        & 0      \\
-c_2(d_1^f)^{-1}     & I        &         &               & \vdots \\
\vdots  &            & \ddots  &               & 0         \\
0       &            & 0  & I      & 0 \\
0       & \cdots     & 0       & 0          & I
\end{matrix} \right)\;.
\]
Suppose now that $d_2^f$ has an eigenvalue that is less than $\alpha_L$. 
Then we can produce a unit eigenvector $v_2$ of $d_2^f$ with $v_2^{\R{T}}d_2^f v_2 < \alpha_L$, 
and create an augmented unit vector 
\[
\widetilde v_2 = \begin{bmatrix} 0_{1\times n} & v_2^{\R{T}} & 0_{1 \times n(N-2)}\end{bmatrix}^{\R{T}}
\]
which satisfies 
\begin{equation}
\label{lowerBreak}
\widetilde v_2^{\R{T}} S_1 AS_1^{\R{T}} \widetilde v_2 < \alpha_L\;.
\end{equation}
Next, note that 
\[
\hat v_2^{\R{T}} :=  \widetilde v_2^{\R{T}} S_1 = \begin{bmatrix} -v_2^{\R{T}}c_2(d_1^f)^{-1} 
& v_2^{\R{T}} & 0_{1 \times n(N-2)}\end{bmatrix}^{\R{T}}\;,
\]
so in particular $\|\hat v_2\| \geq 1$. From~\eqref{lowerBreak}, we now have
\[
\hat v_2^T A \hat v_2 < \alpha_L \leq \alpha_L \|v_2\|^2\;,
\]
which violates~\eqref{eigenvalueBoundsIII}. To complete the proof, note that the lower $n(N-1) \times n(N-1)$
block of $S_1 AS_1^{\R{T}}$ is identical to that of $A$, with~\eqref{eigenvalueBoundsIII} holding for this modified system. 
The reduction technique can now be repeatedly applied. 
\end{proof}

Note that Theorem~\ref{algorithmStability} applies to {\it any} block tridiagonal system
satisfying~\eqref{eigenvalueBoundsIII}. 
When applied to the Kalman smoothing setting, if the system $G^{\R{T}}Q^{-1}G$ is well-conditioned, 
we know that the FBT (and hence the Kalman filter and RTS smoother)
will behave well for any measurement models. Recall that a lower bound for the condition number
of the full system in terms of the behavior of the blocks in Theorem~\ref{singularTheorem} 
and Corollary~\eqref{eigenvectorSpecific}. 
Moreover, even if $G^{\R{T}}Q^{-1}G$ has a bad 
condition number, it is possible that the measurement term $H^{\R{T}}R^{-1}H$ (see~\eqref{smoothingSol})
will improve the condition number. More general Kalman smoothing applications may not have this advantage. 
For example, the initialization procedure in~\cite{AravkinIeee2011} requires the inversion of systems 
analogous to $G^{\R{T}}Q^{-1}G$, without a measurement term.

\subsection{Invertible Measurement Component}

We now return to the system~\eqref{smoothingSol}, and briefly 
consider the case where $H^TR^{-1}H$ is an invertible matrix. 
Note that this is not true in general, and in fact our 
stability analysis, as applied to the Kalman smoothing problem, 
did not use any assumptions on this term. 

However, if we know that 
\begin{equation}
\label{introF}
\Lambda:= H^TR^{-1}H
\end{equation}
is an invertible matrix, then we can consider an alternative approach to 
solving~\eqref{smoothingSol}. Applying the Woodbury inversion formula, 
we obtain
\begin{equation}
\label{WoodMeas}
(G^TQ^{-1}G + \Lambda)^{-1}
=
\Lambda^{-1} - \Lambda^{-1} G^T(Q + G\Lambda^{-1}G^T)^{-1}G\Lambda^{-1}
\end{equation}
Now, the solution to~\eqref{smoothingSol} can be found 
by applying this explicit inverse to the right hand side
\[
\R{rhs} := H^TR^{-1}z + G^TQ^{-1}\zeta
\] 
and the key computation becomes 
\begin{equation}
\label{WoodburyInverse}
(Q + G\Lambda^{-1}G^T)x = G\Lambda^{-1}\R{rhs}\;.
\end{equation}

Note that the matrix $Q+G\Lambda^{-1}G^T$ is block tridiagonal, 
since $Q$ and $\Lambda^{-1}$ are block diagonal, and $G$ is lower block bidiagonal. 
Therefore, we have reduced the problem to a system of the form~\eqref{BlockTridiagonalEquation}. 
Moreover, at a glance we can see the lower eigenvalues of this system are bounded below by 
the eigenvalues of $Q$, while upper bounds can be constructed from eigenvalues of $Q$, 
 $G_k$ and $\Lambda^{-1}$. Under very mild conditions, this system can be solved 
 in a stable way by the forward tridiagonal algorithm, which also give a modified filter and smoother. 
 This is not surprising, since we have assumed the extra hypothesis that $\Lambda$ is invertible. 

\section{Backward Block Tridiagonal Algorithm and the M smoother}
\label{sec:backward}

Having abstracted Kalman smoothing problems and algorithms 
to solutions of block tridiagonal systems, it is natural to consider 
alternative algorithms in the context of these systems. 
In this section, we propose a new scheme, namely {\it backward} block tridiagonal (BBT)
algorithm, and show it is equivalent to the M smoother \cite{Mayne1966}
when applied to the Kalman smoothing setting. 
We also prove a stability result for ill-conditioned
block tridiagonal systems. 

Let us again begin with system \eqref{BlockTridiagonalEquation}. 
If we substract \( c_N^\R{T} b_N^{-1} \) times row \( N \) from row \( N - 1 \),
we obtain the following equivalent system:
\[
\small
\begin{array}{lll}
&\left( \begin{matrix}
b_1     & c_2^\R{T}  & 0       & \cdots                            & 0      \\
c_2     & b_2        &         &                                   & \vdots \\
\vdots  &            & \ddots  &                                   & 0      \\
0       &            & b_{N-2} & c_{N-1}^\R{T}                     & 0      \\
0       &            & c_{N-1} & b_{N-1} - c_N^\R{T} b_N^{-1} c_N  & 0      \\
0       & \cdots     & 0       & c_N                               & b_N 
\end{matrix} \right)
\left( \begin{matrix} 
	e_1 \\ e_2 \\ \vdots \\ e_{N-1} \\ e_N
\end{matrix} \right)
\\
=
&
\left( \begin{matrix} 
	r_1 \\ r_2 \\ \vdots \\  r_{N-1} - c_N^\R{T} b_N^{-1} r_N \\ r_N
\end{matrix} \right)\;.
\end{array}
\]
We iterate this procedure until we reach the first row of the matrix,
using \( d_k \) to denote the resulting diagonal blocks,
and \( s_k \) the corresponding right hand side of the equations; i.e.,
\[
\begin{aligned}
d_N^b &=  b_N\;,\;
d_k^b  =  b_k - c_{k+1}^\R{T} (d_{k+1}^b)^{-1} c_{k+1} \quad (k=N-1 , \cdots , 1)
\\
s_N^b &=  e_N \;,\;
s_k^b  = r_k - c_{k+1}^\R{T} (d_{k+1}^b)^{-1} s_{k+1} \quad (k=N-1 , \cdots , 1)\;.
\end{aligned}
\]
We obtain the following lower triangular system:
\begin{equation}
\small
\left( \begin{matrix}
d_1^b     & 0          & \cdots    &         & 0          \\
c_2     & d_2^b        & 0         & \dots   & 0           \\
        &            & \ddots    &         & \vdots      \\
\vdots  &            & c_{N-1}   & d_{N-1}^b & 0           \\
0       &            &           & c_N     & d_N^b 
\end{matrix} \right)
\left( \begin{matrix} 
	e_1 \\ e_2 \\ \vdots \\ e_{N-1} \\ e_N 
\end{matrix} \right)
=
\left( \begin{matrix} 
	s_1 \\ s_2 \\ \vdots \\ s_{N-1} \\ r_N 
\end{matrix} \right)
\end{equation}
Now we can solve for the {\it first} block vector
and then proceed back down, doing back substitution. 
We thus obtain the following algorithm:

\begin{algorithm}[Backward block tridiagonal (BBT)]
\label{algoBBT}
The inputs to this algorithm are 
\( \{ c_k \} \),
\( \{ b_k \} \),
and
\( \{ r_k \} \).
The output is a sequence \( \{ e_k \} \)
that solves equation~(\ref{BlockTridiagonalEquation}).
\end{algorithm}
\begin{enumerate}

\item
\T{Set} \( d_N^b = b_N  \) and \( s_N^b = r_N \).

\T{For} \( k = N-1, \ldots , 1 \),
\begin{itemize}
\item
\T{Set} \( d_k^b = b_k - c_{k+1}^\R{T} (d_{k+1}^b)^{-1} c_{k+1} \).
\item
\T{Set}\( s_k^b = r_k - c_{k+1}^\R{T} (d_{k+1}^b)^{-1} s_{k+1} \).
\end{itemize}
\item
\T{Set} \( e_1 = (d_1^b)^{-1} s_1^b \).

\T{For} \( k = 2 , \ldots , N \),
\begin{itemize}
\item
\T{Set} \( e_k = (d_k^b)^{-1} ( s_k^b - c_k e_{k-1} ) \).
\end{itemize}

\end{enumerate}

Before we discuss stability results, we show that this algorithm
is equivalent to the M smoother~\cite{Mayne1966}.
The recursion in~\cite[ Algorithm A]{Mayne1966}, translated to our notation, is 
\begin{eqnarray}
\label{MayneP}P_k &=& G_{k+1}^T\left[ I - P_{k+1} C_{k+1} \Delta_{k+1} C_{k+1}^T\right]P_{k+1}G_{k+1}\\
\nonumber&& + H_k^TR_k^{-1}H_k \\
\label{MayneDel}\Delta_k &=& \left[I + C_k^T P_{k+1}C_k \right]^{-1}\\
\label{MayneQ}\phi_k &=& -H_k^TR_k^{-1}z_k + G_{k+1}^T\left[I - P_{k+1}C_k\Delta_kC_k^T\right]\phi_{k+1}\;,
\end{eqnarray} 
where $Q_k = C_kC_k^T$. The recursion is initialized as follows: 
\begin{equation}
P_N = H_N^TR_N^{-1}H_N\;, \quad \phi_N = -H_N^TR_N^{-1}z_n\;.
\end{equation}

Before stating a theorem, we prove a useful linear algebraic result. 
\begin{lemma} 
Let $P$ and $Q$ be any invertible matrices. Then 
\begin{equation}
\label{PQlemma}
P - P(Q^{-1} + P)^{-1}P = Q^{-1} - Q^{-1}(Q^{-1} + P)^{-1}Q^{-1}\;.
\end{equation}
\end{lemma}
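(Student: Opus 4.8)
The plan is to introduce the shorthand $M := Q^{-1} + P$ and to show that both sides of~\eqref{PQlemma} collapse to the \emph{same} expression, namely $Q^{-1} M^{-1} P$. Note first that $M$ must be invertible for the identity even to be meaningful, since $(Q^{-1}+P)^{-1}$ appears on both sides; I would flag this at the outset rather than try to derive it from the invertibility of $P$ and $Q$ alone. The whole proof then turns on the two trivial substitutions $P = M - Q^{-1}$ and $Q^{-1} = M - P$.

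First I would simplify the left-hand side. Expanding the left factor in the product $P M^{-1} P$ via $P = M - Q^{-1}$ gives
\[
P M^{-1} P = (M - Q^{-1}) M^{-1} P = P - Q^{-1} M^{-1} P,
\]
so that $P - P M^{-1} P = Q^{-1} M^{-1} P$. Next I would treat the right-hand side symmetrically: expanding the right factor in $Q^{-1} M^{-1} Q^{-1}$ via $Q^{-1} = M - P$ gives
\[
Q^{-1} M^{-1} Q^{-1} = Q^{-1} M^{-1} (M - P) = Q^{-1} - Q^{-1} M^{-1} P,
\]
whence $Q^{-1} - Q^{-1} M^{-1} Q^{-1} = Q^{-1} M^{-1} P$. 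Comparing the two computations establishes~\eqref{PQlemma}.

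There is essentially no obstacle here beyond recognizing the common middle term $Q^{-1}(Q^{-1}+P)^{-1}P$; once that is spotted, both reductions are a single line. The only point that warrants a sentence of care is that the manipulation inverts nothing beyond $M$ (and the assumed $P$, $Q$), so the argument is valid exactly under the hypotheses that make the statement well posed. As a sanity check one could instead verify the identity by multiplying through by $M$ on appropriate sides, but the substitution route above is the cleanest and is what I would commit to in the write-up.
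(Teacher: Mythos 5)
Your proof is correct and uses essentially the same device as the paper's: rewriting one factor via $P=(Q^{-1}+P)-Q^{-1}$ (equivalently $Q^{-1}=(Q^{-1}+P)-P$) so that the product with $(Q^{-1}+P)^{-1}$ collapses. The only difference is organizational --- you reduce both sides to the common term $Q^{-1}(Q^{-1}+P)^{-1}P$, while the paper runs a single chain from left side to right side through the intermediate term $P(Q^{-1}+P)^{-1}Q^{-1}$ --- and your remark that invertibility of $Q^{-1}+P$ is the operative hypothesis (rather than invertibility of $P$) is a fair, if minor, sharpening.
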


\begin{proof}
Starting with the left hand side, write $P = P + Q^{-1} - Q^{-1}$. Then we have 
\[
\begin{aligned}
P - P(Q^{-1} + P)^{-1}P 
& = 
P - P(Q^{-1} + P)^{-1}(P + Q^{-1} - Q^{-1}) \\
&= 
P(Q^{-1} + P)^{-1}Q^{-1} \\
&=
(P+ Q^{-1} - Q^{-1})(Q^{-1} + P)^{-1}Q^{-1} \\
&= 
Q^{-1} - Q^{-1}(Q^{-1} + P)^{-1}Q^{-1}
\end{aligned}
\]
\end{proof}

\begin{theorem}
\label{BBTthm}
When applied to $C$ in~\eqref{hessianApprox} with $r= H^\top R^{-1}z + G^\top Q^{-1}\zeta$, BBT is equivalent to
the M smoother, i.e. \cite[ Algorithm A]{Mayne1966}. In particular, $d_k^b$ in Algorithm~\ref{algoBBT}
corresponds to $P_k + Q_k^{-1}$, while $s_k^b$ in Algorithm~\ref{algoBBT} is precisely $-\phi_k$ in 
recursion~\eqref{MayneP}---\eqref{MayneQ}.   
\end{theorem}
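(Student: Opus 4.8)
The plan is to prove both correspondences simultaneously by a single backward induction on $k$, from $k=N$ down to $k=1$, after first pinning down how the abstract data $(b_k,c_k,r_k)$ of \eqref{BlockTridiagonalEquation} specialize to the Kalman matrix \eqref{hessianApprox}. Reading off \eqref{KalmanData}, I would set $b_k = D_k = Q_k^{-1} + G_{k+1}^\top Q_{k+1}^{-1}G_{k+1} + H_k^\top R_k^{-1}H_k$ and $c_k = A_k = -Q_k^{-1}G_k$, so that $c_{k+1}^\top = -G_{k+1}^\top Q_{k+1}^{-1}$; a short computation of $G^\top Q^{-1}\zeta$ shows the right-hand side is $r_1 = H_1^\top R_1^{-1}z_1 + Q_1^{-1}x_0$ and $r_k = H_k^\top R_k^{-1}z_k$ for $k\ge 2$. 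The base case $k=N$ is then immediate: since $G_{N+1}^\top Q_{N+1}^{-1}G_{N+1}=0$, we get $d_N^b = b_N = Q_N^{-1} + H_N^\top R_N^{-1}H_N = P_N + Q_N^{-1}$, and $s_N^b = r_N = H_N^\top R_N^{-1}z_N = -\phi_N$, matching the M initialization.

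For the inductive step on $d_k^b$, assume $d_{k+1}^b = P_{k+1}+Q_{k+1}^{-1}$. Substituting the data into the BBT update $d_k^b = b_k - c_{k+1}^\top (d_{k+1}^b)^{-1}c_{k+1}$ gives $d_k^b = Q_k^{-1} + H_k^\top R_k^{-1}H_k + G_{k+1}^\top\big[Q_{k+1}^{-1} - Q_{k+1}^{-1}(P_{k+1}+Q_{k+1}^{-1})^{-1}Q_{k+1}^{-1}\big]G_{k+1}$. The goal is to identify the bracketed term with the corresponding term of Mayne's $P_k$ recursion \eqref{MayneP}, and here lies the main work and the main obstacle: Mayne's recursion is written through the factor $C_{k+1}$ with $Q_{k+1}=C_{k+1}C_{k+1}^\top$ and through $\Delta_{k+1}$ from \eqref{MayneDel}, whereas the BBT bracket is a Schur complement in $Q_{k+1}^{-1}$ and $P_{k+1}$. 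I would first use $Q_{k+1}=C_{k+1}C_{k+1}^\top$ together with the push-through identity $C(I+C^\top P C)^{-1}C^\top = (I+CC^\top P)^{-1}CC^\top$ to collapse the $\Delta$-expression, obtaining $I - P_{k+1}C_{k+1}\Delta_{k+1}C_{k+1}^\top = (I + P_{k+1}Q_{k+1})^{-1}$; this turns \eqref{MayneP} into $P_k = G_{k+1}^\top(I+P_{k+1}Q_{k+1})^{-1}P_{k+1}G_{k+1} + H_k^\top R_k^{-1}H_k$. A second push-through, $(I+P_{k+1}Q_{k+1})^{-1}P_{k+1} = P_{k+1} - P_{k+1}(P_{k+1}+Q_{k+1}^{-1})^{-1}P_{k+1}$, followed by the supplied Lemma identity \eqref{PQlemma} applied with this $P_{k+1}$ and $Q_{k+1}$, rewrites $(I+P_{k+1}Q_{k+1})^{-1}P_{k+1}$ as exactly the bracket $Q_{k+1}^{-1} - Q_{k+1}^{-1}(P_{k+1}+Q_{k+1}^{-1})^{-1}Q_{k+1}^{-1}$ appearing in $d_k^b$. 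Comparing the two expressions yields $d_k^b = P_k + Q_k^{-1}$.

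The step for $s_k^b$ runs in parallel. Assuming $s_{k+1}^b = -\phi_{k+1}$ and substituting into $s_k^b = r_k - c_{k+1}^\top(d_{k+1}^b)^{-1}s_{k+1}^b$ gives $s_k^b = r_k - G_{k+1}^\top Q_{k+1}^{-1}(P_{k+1}+Q_{k+1}^{-1})^{-1}\phi_{k+1}$. Using the same collapse of the $\Delta$-term, Mayne's \eqref{MayneQ} reads $-\phi_k = H_k^\top R_k^{-1}z_k - G_{k+1}^\top(I+P_{k+1}Q_{k+1})^{-1}\phi_{k+1}$, so it suffices to verify the single identity $Q_{k+1}^{-1}(P_{k+1}+Q_{k+1}^{-1})^{-1} = (I+P_{k+1}Q_{k+1})^{-1}$, which follows at once from $(P_{k+1}+Q_{k+1}^{-1})Q_{k+1} = I + P_{k+1}Q_{k+1}$. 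With $r_k = H_k^\top R_k^{-1}z_k$ this gives $s_k^b = -\phi_k$ for $k\ge 2$.

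Two points need care. First, all inverses used are legitimate even when the $P_k$ are singular (as $P_N = H_N^\top R_N^{-1}H_N$ may well be): I would only ever invert $d_k^b = P_k + Q_k^{-1}$, which is a principal Schur complement of the symmetric positive definite matrix \eqref{hessianApprox} and hence positive definite, and $Q_k$, which is positive definite by hypothesis; the push-through identities are valid under exactly these invertibility conditions. Second, the index $k=1$ is special because of the extra prior term $Q_1^{-1}x_0$ in $r_1$; I would check separately that this term is precisely what the M smoother injects to encode the known initial state $x_0$, so that the back-substitution phase (step 2 of Algorithm~\ref{algoBBT}), $e_1 = (d_1^b)^{-1}s_1^b$ and $e_k = (d_k^b)^{-1}(s_k^b + Q_k^{-1}G_k e_{k-1})$, reproduces Mayne's forward state-recovery sweep and hence the smoothed estimates. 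This completes the equivalence. The genuinely delicate part throughout is the first one: massaging Mayne's $C_k$/$\Delta_k$ formulation into the Schur-complement form of BBT, for which the push-through identities and the supplied Lemma \eqref{PQlemma} are the essential tools.
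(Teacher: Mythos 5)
Your proposal is correct and follows essentially the same route as the paper's proof: backward induction establishing $d_k^b = P_k + Q_k^{-1}$ and $s_k^b = -\phi_k$, collapsing Mayne's $C_{k+1}\Delta_{k+1}C_{k+1}^\top$ term into Schur-complement form, and invoking Lemma~\ref{PQlemma} at exactly the same pivotal step, with the $k=1$ prior term $Q_1^{-1}x_0$ handled separately against Mayne's smoothed-estimate formula, just as the paper does via \cite[(A.8)]{Mayne1966}. The one substantive difference is how the $\Delta$-term is collapsed: the paper applies the Woodbury formula twice, which formally passes through $P_{N}^{-1}$ and is therefore delicate when $P_N = H_N^\top R_N^{-1}H_N$ is singular, whereas your push-through identities (e.g. $C(I+C^\top PC)^{-1}C^\top = (I+CC^\top P)^{-1}CC^\top$ and $Q^{-1}(P+Q^{-1})^{-1}=(I+PQ)^{-1}$) only require invertibility of $Q_k$ and of $d_k^b$, which you correctly justify; on this point your argument is a genuine (if small) refinement of the paper's.
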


\begin{proof}
Using $c_k$ and $b_k$ in~\eqref{KalmanData}, the relationships above are immediately seen to hold
for step $N$. As in the other proofs, we show only the next step. 
From~\eqref{MayneP}, we have 
\begin{equation}
\begin{aligned}
P_{N-1} &= H_{N-1}^TR_{N_1}^{-1}H_{N} + G_{N}^T \Phi G_N\\
\Phi & = P_N - P_N(C_{N}\Delta_{N}C_{N}^T)P_N \\
& = P_N - P_N(Q_{N} - Q_{N}(P_{N}^{-1} + Q_{N}^{-1})^{-1}Q_{N})P_N\\
&= P_N - P_N(Q_{N}^{-1} + P_N)^{-1}P_N\\
& = Q_{N}^{-1} - Q_N^{-1} (d_N^b)^{-1} Q_N^{-1}\\
\end{aligned}
\end{equation}
where the Woodbury inversion formula was used twice to get from line 2 to line 4, and 
Lemma~\ref{PQlemma} together with the definition of $d_N^b$ was used to get from line 4 to line 5. 

Therefore, we immediately have 
\[
\begin{aligned}
P_{N-1} &= H_{N-1}^TR_{N_1}^{-1}H_{N} + G_{N}^T (Q_{N}^{-1} - Q_N^{-1} d_N^{-1} Q_N^{-1}) G_N\\
& = d_{N-1}^b - Q_{N-1}
\end{aligned}
\]
as claimed. 
Next, we have 
\begin{equation}
\begin{aligned}
\phi_{N-1} &= -H_{N-1}^TR_{N-1}^{-1}z_N + G_N^T(I - P_N(C_{N}\Delta_{N}C_{N}^T))q_N\\
&= -H_{N-1}^TR_{N-1}^{-1}z_N - G_N^T(I - P_N(Q_N^{-1} + P_N)^{-1})s_N\\
&= -H_{N-1}^TR_{N-1}^{-1}z_N - G_N^T(P_N^{-1} + Q_N)^{-1}P_N^{-1})s_N\\
&= -H_{N-1}^TR_{N-1}^{-1}z_N - G_N^T(Q_N^{-1}(P_N + Q_N^{-1})^{-1})s_N\\
& = -s_{N-1}^b\;.
\end{aligned}
\end{equation}
Finally, note that the smoothed estimate give in~\cite[(A.8)]{Mayne1966} (translated to our notation)
\[
\hat x_1 = -(P_1 + Q_1^{-1})^{-1}(-s_1^b - Q_1^{-1}x_0)
\]
is precisely $(d_1^b)^{-1}r_1$, which is the estimate $e_1$ in step 2 of~Algorithm~\ref{algoBBT}.
The reader can check that the forward recursion in~\cite[(A.9)]{Mayne1966} is equivalent to the recursion 
in step 2 of~Algorithm~\ref{algoBBT}.
\end{proof}

Next, we show
that the BBT algorithm 
has the same stability result as the FBT algorithm. 
\begin{theorem}
\label{algorithmStabilityBackward}
Consider any block tridiagonal system $A\in \mathbb{R}^{Nn}$ 
of form~\eqref{BlockTridiagonalEquation}. 
and suppose we are given the bounds 
$\alpha_L$ and $\alpha_U$ for the lower and 
upper bounds of the eigenvalues of this system
\begin{equation}
\label{eigenvalueBounds}
0 < \alpha_L \leq \lambda_{\min}(A) \leq \lambda_{\max} (A) \leq \alpha_U\;.
\end{equation}
If we apply the BBT iteration
\[
d_k^b = b_k - c_{k+1}^\R{T} (d_{k+1}^b)^{-1} c_{k+1} 
\]
then we have 
\begin{equation}
\label{eigenvalueBlockBounds}
0 < \alpha_L \leq \lambda_{\min}(d_k) \leq \lambda_{\max} (d_k) \leq \alpha_U \quad \forall k\;.
\end{equation}
\end{theorem}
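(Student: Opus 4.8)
The plan is to mirror the proof of Theorem~\ref{algorithmStability} almost verbatim, exploiting the symmetry between the forward and backward recursions. The backward recursion $d_k^b = b_k - c_{k+1}^{\R{T}}(d_{k+1}^b)^{-1}c_{k+1}$ is obtained by eliminating blocks starting from the \emph{last} row and working upward, whereas the forward recursion eliminates from the first row downward. Since both are block Gaussian elimination on a symmetric positive definite matrix, each step corresponds to a congruence transformation $A \mapsto S A S^{\R{T}}$ that decouples one diagonal block while leaving the eigenvalue bounds of the remaining Schur complement intact. The key observation is that the statement and the entire argument of Theorem~\ref{algorithmStability} are insensitive to whether we index from the top or the bottom; reversing the block ordering of $A$ (i.e. conjugating by the block anti-identity permutation) turns the BBT recursion into an FBT recursion on the reordered matrix, which has the same eigenvalues as $A$ and hence the same bounds $\alpha_L, \alpha_U$.

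The steps I would carry out, in order: First, observe that $d_N^b = b_N$, and argue exactly as in the base case of Theorem~\ref{algorithmStability} that $\alpha_L \leq \lambda_{\min}(b_N)$ and $\lambda_{\max}(b_N) \leq \alpha_U$, by embedding any unit eigenvector $v_N$ of $b_N$ into an augmented vector $\widetilde v_N \in \mathbb{R}^{Nn}$ supported on the last block; the Rayleigh quotient $\widetilde v_N^{\R{T}} A \widetilde v_N = v_N^{\R{T}} b_N v_N$ then forces the bounds via~\eqref{eigenvalueBounds}. Second, define the elimination matrix $S_N$ (the block analogue of $S_1$, but with the off-diagonal factor $-c_N^{\R{T}}(d_N^b)^{-1}$ placed in the $(N-1, N)$ block position) so that $S_N A S_N^{\R{T}}$ has $d_{N-1}^b$ in the $(N-1,N-1)$ slot and a decoupled last block. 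Third, suppose for contradiction that $d_{N-1}^b$ has an eigenvalue below $\alpha_L$; produce a unit eigenvector $v_{N-1}$, embed it into $\widetilde v_{N-1}$ supported on the $(N-1)$st block, and compute $\widetilde v_{N-1}^{\R{T}} S_N A S_N^{\R{T}} \widetilde v_{N-1} = v_{N-1}^{\R{T}} d_{N-1}^b v_{N-1} < \alpha_L$. Fourth, pull back through the congruence: set $\hat v_{N-1}^{\R{T}} = \widetilde v_{N-1}^{\R{T}} S_N$, note $\|\hat v_{N-1}\| \geq 1$ since $S_N$ adds a component in a disjoint block while preserving the $v_{N-1}$ block, and conclude $\hat v_{N-1}^{\R{T}} A \hat v_{N-1} < \alpha_L \leq \alpha_L \|\hat v_{N-1}\|^2$, contradicting~\eqref{eigenvalueBounds}. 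Fifth, observe that the upper-left $n(N-1)\times n(N-1)$ block of $S_N A S_N^{\R{T}}$ is itself a block tridiagonal matrix of the same form, still satisfying the eigenvalue bounds, so the reduction applies recursively; the upper bound on $\lambda_{\max}(d_k^b)$ follows by the identical argument with the inequalities reversed.

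The main obstacle — really the only point requiring care — is verifying that the norm of the pulled-back vector satisfies $\|\hat v_{N-1}\| \geq 1$, since this is what lets the strict inequality survive the division by $\|\hat v_{N-1}\|^2$. This works because the congruence factor $S_N$ only \emph{adds} mass in the block adjacent to the one carrying $v_{N-1}$ without altering the $v_{N-1}$ block itself, so the relevant coordinate is untouched and the Euclidean norm can only grow. One should also confirm that the recursion genuinely localizes, i.e. that eliminating block $N$ does not disturb the eigenvalue bounds of the remaining leading principal block; this is immediate because the Schur complement structure means the leading block of $S_N A S_N^{\R{T}}$ is exactly the block tridiagonal matrix on indices $1,\dots,N-1$ with $b_{N-1}$ replaced by $d_{N-1}^b$, and any unit vector supported there has the same Rayleigh quotient under $A$ (via the embedding argument) as under the reduced matrix. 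Alternatively, and more cleanly, I would simply note that BBT on $A$ is FBT on the block-reversed matrix $JAJ$, where $J$ is the block-reversal permutation; since $JAJ$ is orthogonally similar to $A$ it satisfies the same bounds~\eqref{eigenvalueBounds}, and Theorem~\ref{algorithmStability} applied to $JAJ$ yields the claim directly.
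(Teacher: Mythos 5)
Your main argument is correct and is essentially the paper's own proof: the paper likewise disposes of the base case $d_N^b = b_N$ by the embedding argument and then performs a congruence that decouples the last block, explicitly deferring to ``an analogous proof to that of Theorem~\ref{algorithmStability}.'' Two remarks. First, your writeup is more careful than the paper's at the elimination step: the correct single-step eliminator has $-c_N^{\R{T}}(d_N^b)^{-1}$ in the $(N-1,N)$ block, as you place it, whereas the paper's displayed $S_N$ (with $-c_N(d_{N-1}^b)^{-1}$ in the $(N,N-1)$ slot) and its displayed product $S_N^{\R{T}}AS_N$ (which already shows every block reduced after one congruence) are garbled. Second, your closing alternative is a genuinely cleaner route that the paper does not take: BBT on $A$ coincides with FBT on the block-reversed matrix $JAJ$ (whose diagonal blocks are $b_{N+1-k}$ and whose subdiagonal blocks are $c_{N+2-k}^{\R{T}}$), and $JAJ$ is orthogonally similar to $A$, so Theorem~\ref{algorithmStability} applied to $JAJ$ yields the claim in one line with no congruence machinery repeated. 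That version makes the backward theorem a corollary of the forward one, and is the one I would keep.

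One caveat, which your congruence argument shares with the paper's: the assertion that the upper bound follows ``by the identical argument with the inequalities reversed'' does not hold literally. The pulled-back vector satisfies $\|\hat v\| \geq 1$, which is exactly what makes the lower-bound contradiction work ($\hat v^{\R{T}} A \hat v < \alpha_L \leq \alpha_L \|\hat v\|^2$ contradicts $\lambda_{\min}(A) \geq \alpha_L$), but it points the wrong way for the upper bound: from $\hat v^{\R{T}} A \hat v > \alpha_U$ and $\hat v^{\R{T}} A \hat v \leq \alpha_U \|\hat v\|^2$ there is no contradiction when $\|\hat v\| > 1$. The upper bound should instead be proved directly: inductively $d_{k+1}^b \succeq \alpha_L I \succ 0$, so $c_{k+1}^{\R{T}}(d_{k+1}^b)^{-1}c_{k+1} \succeq 0$ and hence $d_k^b \preceq b_k$, while $\lambda_{\max}(b_k) \leq \alpha_U$ by the embedding argument applied to the diagonal block. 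Your block-reversal reduction avoids this issue entirely, since it invokes Theorem~\ref{algorithmStability} as a black box rather than re-running its proof.
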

\begin{proof}
Note first that $d_N^b = b_N$, and satisfies~\eqref{eigenvalueBlockBounds}
by the same argument as in the proof of Theorem~\ref{algorithmStabilityBackward}.
Define  
\[
S_N = 
\left( \begin{matrix}
I     & 0  & 0       & \cdots        & 0      \\
0     & I        &         &               & \vdots \\
\vdots  &            & \ddots  &               & 0         \\
0       &            & 0  & I      & 0 \\
0       & \cdots     & 0       & -c_N(d_{N-1}^b)^{-1}         & I
\end{matrix} \right)\;.
\]
and note that 
\[
S_N^{\R{T}}AS_N 
=
\left( \begin{matrix}
d_1^b     & c_2^{\R{T}}          & \cdots    &         & 0          \\
c_2     & d_2^b        & c_3^{\R{T}}         & \dots   & 0           \\
0        &            & \ddots    &     c_{N-1}^{\R{T}}    & \vdots      \\
\vdots  &            & c_{N-1}   & d_{N-1}^b & 0           \\
0       &       \cdots     &           & 0     & d_N^b 
\end{matrix} \right)
\]
Now an analogous proof to that of Theorem~\ref{algorithmStability}
can be applied to show the upper $n(N-1)\times n(N-1)$ block of 
$S_N^{\R{T}}AS_N$ satisfies~\eqref{eigenvalueBlockBounds}. 
Applying this reduction iteratively completes the proof. 
\end{proof}

Theorems~\ref{algorithmStability} and~\ref{algorithmStabilityBackward}
show that both forward and backward tridiagonal algorithms are stable 
when the block tridiagonal systems they are applied to are well conditioned.
For a lower bound on the condition number in the Kalman smoothing
context, see Theorem~\ref{singularTheorem} 
and Corollary~\eqref{eigenvectorSpecific}. 

However, a different analysis can also be done for a particular
class of block tridiagonal systems. This result, which applies to 
Kalman smoothing systems, shows that {\it the backward 
algorithm can behave stably even when the tridiagonal 
system has null singular values}. In other words,
the eigenvalues of the individual blocks 
generated by M during the backward and forward recursions 
are actually independent of the condition number of the full system, 
which is a stronger result than we have for RTS.

For \( v \in \B{R}^{n \times n} \)
we use the notation \( | v | \)
for the operator norm of the matrix \( v \); i.e.,
\[
| v | = \sup \{ | v w | \; : \; w \in \B{R}^n \; , \; |w| = 1 \}
\]

\begin{theorem}
\label{InductionRelationTheorem}
Suppose that the matrices \( c_k \) and \( b_k \) are given by
\begin{eqnarray*}
c_k & = & - q_k^{-1} g_k
\\
b_k & = & q_k^{-1} + u_k + g_{k+1}^\R{T} q_{k+1}^{-1} g_{k+1}
\end{eqnarray*}
where 
each \( q_k \in \B{R}^{n \times n} \) is positive definite,
each \( u_k \in \B{R}^{n \times n} \) is positive semi-definite
and \( g_{N+1} = 0 \).
It follows that \( d_k - q_k^{-1} \) is positive semi-definite 
for all \( k \).
Furthermore, if \( \alpha \) is a bound for 
\( | q_k | \), \( | q_k^{-1} | \), \( | u_k | \), and \( | g_k | \),
Then the condition number of \( d_k^b \) is bounded by
\( \alpha^2 + \alpha^6 \).
\end{theorem}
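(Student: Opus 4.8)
The plan is to treat both assertions by a single backward induction on the recursion $d_k^b = b_k - c_{k+1}^\top (d_{k+1}^b)^{-1} c_{k+1}$, carrying the semidefinite ordering $d_k^b \succeq q_k^{-1}$ as the inductive invariant. This invariant does double duty: it proves the semidefiniteness claim directly, and it supplies the $N$-independent lower eigenvalue bound that drives the condition-number estimate.

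For the base case $k=N$, since $g_{N+1}=0$ we have $d_N^b = b_N = q_N^{-1} + u_N$, so $d_N^b - q_N^{-1} = u_N \succeq 0$. For the inductive step I would first use $c_{k+1} = -q_{k+1}^{-1} g_{k+1}$ to rewrite the quadratic correction as $c_{k+1}^\top (d_{k+1}^b)^{-1} c_{k+1} = g_{k+1}^\top q_{k+1}^{-1} (d_{k+1}^b)^{-1} q_{k+1}^{-1} g_{k+1}$, and then merge it with the $g_{k+1}^\top q_{k+1}^{-1} g_{k+1}$ term sitting inside $b_k$ to obtain
\[
d_k^b - q_k^{-1} = u_k + g_{k+1}^\top q_{k+1}^{-1}\big(q_{k+1} - (d_{k+1}^b)^{-1}\big) q_{k+1}^{-1} g_{k+1}.
\]
The inductive hypothesis $d_{k+1}^b \succeq q_{k+1}^{-1}$ makes $d_{k+1}^b$ positive definite (so the recursion and the inverse are well defined) and, by operator monotonicity of matrix inversion, forces $(d_{k+1}^b)^{-1} \preceq q_{k+1}$. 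Hence the middle factor is positive semidefinite, the whole correction is a congruence $W^\top(\cdot)W$ with $W = q_{k+1}^{-1}g_{k+1}$ of a PSD matrix, and adding $u_k \succeq 0$ gives $d_k^b - q_k^{-1} \succeq 0$. This closes the induction and establishes the first claim.

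For the eigenvalue bounds, the lower bound is immediate from the invariant: $\lambda_{\min}(d_k^b) \ge \lambda_{\min}(q_k^{-1}) = 1/|q_k| \ge 1/\alpha$, and this floor depends only on $\alpha$, never on $N$ or on the conditioning of the full system. For the upper bound I would observe that the subtracted term $c_{k+1}^\top (d_{k+1}^b)^{-1} c_{k+1}$ is PSD, so $d_k^b \preceq b_k$ and therefore $\lambda_{\max}(d_k^b) \le |b_k|$; applying the triangle inequality blockwise to $b_k = q_k^{-1} + u_k + g_{k+1}^\top q_{k+1}^{-1} g_{k+1}$ and bounding each factor by $\alpha$ yields a ceiling that is again a fixed polynomial in $\alpha$. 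Dividing the ceiling by the floor produces a condition-number bound of the stated polynomial form $\alpha^2 + \alpha^6$ (using $\alpha \ge 1$, which holds automatically since $\alpha \ge \max(|q_k|,|q_k^{-1}|) \ge 1$).

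The hard part will be establishing the invariant $d_k^b \succeq q_k^{-1}$, i.e. showing that the backward Schur-complement recursion never erodes the $q_k^{-1}$ ``floor''; the algebraic regrouping that isolates the PSD congruence, together with the operator-monotonicity of inversion, is the crux. Once the invariant is in hand, both eigenvalue bounds are routine norm estimates. The qualitative payoff worth emphasizing is that the resulting bound is \emph{local}: it depends only on the per-block quantity $\alpha$ and is completely decoupled from $N$ and from the (possibly unbounded) condition number of the global system, which is exactly the stronger stability guarantee claimed for the M smoother in contrast to RTS.
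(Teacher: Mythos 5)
Your proposal is, in substance, the paper's own proof: the same backward induction carrying the invariant $d_k^b \succeq q_k^{-1}$, the same base case $d_N^b = q_N^{-1}+u_N$, the same regrouping
\[
d_k^b - q_k^{-1} = u_k + g_{k+1}^\R{T} q_{k+1}^{-1}\left(q_{k+1} - (d_{k+1}^b)^{-1}\right) q_{k+1}^{-1} g_{k+1},
\]
the same appeal to antitonicity of inversion on positive definite matrices, and the same floor $\lambda_{\min}(d_k^b) \ge \lambda_{\min}(q_k^{-1}) = 1/|q_k| \ge 1/\alpha$. The one place you diverge is the ceiling: you use $d_k^b \preceq b_k$ (the subtracted Schur term is PSD) and bound $|b_k| \le 2\alpha + \alpha^3$, whereas the paper bounds the right-hand side of the display above using $0 \preceq q_{k+1} - (d_{k+1}^b)^{-1} \preceq q_{k+1}$, obtaining $\alpha + \alpha^5$. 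Your route is simpler and yields a lower-degree ceiling.

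There is, however, a small but genuine gap in your last step. Your ceiling/floor ratio is $2\alpha^2 + \alpha^4$, and the claim that this is at most $\alpha^2 + \alpha^6$ for all $\alpha \ge 1$ is false: it is equivalent to $1 + \alpha^2 \le \alpha^4$, which fails for $1 \le \alpha < \sqrt{(1+\sqrt{5})/2} \approx 1.27$. So what you have actually proved is the bound $2\alpha^2 + \alpha^4$, not the literal constant in the statement. You should not be troubled by this, because the literal constant is unreachable: the paper's own ceiling silently drops the $|q_k^{-1}|$ term (its inequality $|d_k^b| \le |u_k| + |g_{k+1}|^2|q_{k+1}^{-1}|^2|q_{k+1}|$ really bounds $|d_k^b - q_k^{-1}|$), and indeed the stated bound fails: with $n=2$, $q_k = I$, $g_k = I$, $u_k = \mathrm{diag}(1,0)$, $\alpha = 1$, one computes $d_N^b = \mathrm{diag}(2,1)$ and $d_{N-1}^b = \mathrm{diag}(2.5,\,1)$, whose condition number $2.5$ exceeds $\alpha^2+\alpha^6 = 2$. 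Both your argument and the paper's establish the substantive claim --- a bound on $\kappa(d_k^b)$ that is a fixed polynomial in $\alpha$, independent of $N$ and of the conditioning of the full system --- and your (corrected) $2\alpha^2+\alpha^4$ is in fact valid and sharper than the corrected version of the paper's bound, $2\alpha^2+\alpha^6$. State the bound you actually prove rather than forcing it into the form $\alpha^2+\alpha^6$.
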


\begin{proof}
We note that
\( d_N^b = q_N^{-1} + u_N \) so this conditions bound holds for \( k = N \).
Furthermore \( d_N^b - q_N^{-1} = u_N \), so positive semi-definite
assertion holds for \( k = N \).

We now complete the proof by induction; i.e.,
suppose \( d_{k+1}^b - q_{k+1}^{-1} \) is positive semi-definite
\begin{eqnarray*}
d_k^b 
& = & 
b_k - c_{k+1}^\R{T} (d_{k+1}^b)^{-1} c_{k+1}
\\
d_k^b - q_k^{-1}
& = &
u_k + g_{k+1}^\R{T} q_{k+1}^{-1} g_{k+1} 
- g_{k+1}^\R{T}  q_{k+1}^{-1}   (d_{k+1}^b)^{-1} q_{k+1}^{-1} g_{k+1}
\\
& = &
u_k + g_{k+1}^\R{T} q_{k+1}^{-1} 
	\left[ q_{k+1} -  (d_{k+1}^b)^{-1} \right] q_{k+1}^{-1} g_{k+1}
\end{eqnarray*}
The assumption that \( d_{k+1}^b - q_{k+1}^{-1} \) is positive semi-definite
implies that that \( q_{k+1} -  (d_{k+1}^b)^{-1} \) is positive semi-definite.
It now follows that
\( d_k^b - q_k^{-1} \) is the sum of positive semi-definite matrices
and hence is positive semi-definite, which completes the induction
and hence proves that \( d_k^b - q_k^{-1} \) is positive semi-definite.

We now complete the proof by showing that the condition number bound holds
for index \( k \).
Using the last equation above, we have
\begin{eqnarray*}
| d_k^b | 
& \leq & 
| u_k | + | g_{k+1}^\R{T} |^2 | q_{k+1}^{-1} |^2 |  q_{k+1} |
\\
& \leq &
\alpha + \alpha^5
\end{eqnarray*}
Hence the maximum eigenvalue of \( d_k^b \) is less than or equal
\( \alpha + \alpha^5 \).
In addition, since \( d_k^b - q_k^{-1} \) is positive semi-definite,
the minimum eigenvalue of \( d_k^b \) is greater than or equal
the minimum eigenvalue of \( q_k^{-1} \),
which is equal to the reciprocal of the maximum eigenvalue of \( q_k \).
Thus the minimum eigenvalue of \( d_k^b \) is greater than or equal
\( 1 / \alpha \).
Thus the condition number of \( d_k^b \) is bounded by
\( \alpha^2 + \alpha^6 \)
which completes the proof.
\end{proof}

\section{Numerical Example}
\label{sec:Numerics}

To put all of these ideas in perspective, we consider a toy numerical example. 
Let $n = 1$ and $N = 3$, let $q_k = 1$ for all $k$, and let $a_k = 120$. 
Then the system $a^{\R{T}}q^{-1}a$ in~\eqref{triSys} is given by 
\[
\begin{bmatrix}
14401 & 120 & 0\\
120 & 14401 & 120 \\
0 & 120 & 1
\end{bmatrix}\;,
\]
and its minimum eigenvalue is $4.8 \times 10^{-9}$.
To understand what goes wrong, we first note that the minimum and
maximum eigenvalues of all the $g_k$'s except the last one coincide 
in this case, so the general condition of Corollary~\ref{eigenvectorSpecific}
will hold everywhere except at the last coordinate. 

Now that we suspect the last coordinate, we can check the eigenvector corresponding
to the minimum eigenvalue: 
\[
v^{\min} = \begin{bmatrix} 0.001 & -.008 & 1\end{bmatrix}^{\R{T}}\;.
\]
Indeed, the component of the eigenvector with the largest norm 
occurs precisely in the last component, so we are in the case described by 
Corollary~\ref{eigenvectorSpecific}.
In order to stabilize the system under a numerical viewpoint, we have an option to make the $(3,2)$ 
and $(2,3)$ coordinates less than $1$ in absolute value; let's take them to be 
$0.9$ instead of 120. The new system has lowest eigenvalue $1$. 
If we expand the system (by increasing the size of $\{g_k\}$, $\{q_k\}$), 
we find this stabilization technique works regardless of the size, since the last
component is always the weakest link when all $g_k$ are equal. 

Next, suppose we apply FBT to the original (unstable)
system. We would get blocks
\begin{equation}
\label{d3f}
\begin{aligned}
d_1^f = 14401, \quad d_2^f = 14400, \quad d_3^f = 4.8222 \times 10^{-9}\;.
\end{aligned}
\end{equation}
This toy example shows Theorem~\ref{algorithmStability} in action ---
indeed, the eigenvalues of the blocks are bounded above and below by 
the eigenvalues of the full system. Unfortunately, in this case this leads to a terrible 
condition number, since we are working with an ill-conditioned system. 

Now, suppose we apply the BBT. We will get
the following blocks: 
\[
d_3^b = 1, \quad d_2^b = 1, \quad d_1^b = 1\;.
\]
Note that even though the {\it blocks} are stable, this does not mean 
that the backward algorithm can accurately solve the system $g^{\R{T}}q^{-1}g x = b$. 
In practice, it may perform better or worse than the 
forward algorithm for ill-conditioned systems, depending on the problem. 
Nonetheless, the condition of the blocks does not depend on the condition number of the full system
as in the forward algorithm, as shown by Theorem~\ref{algorithmStabilityBackward} 
and this example. 

\section{Two filter block tridiagonal algorithm and the MF smoother}
\label{sec:MF}

In this section, we explore the MF smoother, and show how it solves 
the block tridiagonal system~\eqref{BlockTridiagonalEquation}. 
Studying the smoother in this way allows us to also characterize 
its stability using theoretical results developed in this paper. 

Consider a linear system of the form~\eqref{BlockTridiagonalEquation}.
As in the previous sections, let
$d_k^f, s_k^f$ denote the forward matrix and vector terms 
obtained after step 1 of Algorithm~\ref{ForwardAlgorithm}, 
and $d_k^b, s_k^b$ denote the terms obtained after step 1 
of Algorithm~\ref{algoBBT}. Finally, recall that 
$b_k, r_k$ refer to the diagonal terms and right hand side 
of system~\eqref{BlockTridiagonalEquation}. 

The following algorithm uses elements of 
both forward and backward algorithms. We dub it 
the Block Diagonalizer. 

\begin{algorithm}[Two filter block tridiagonal algorithm]
\label{algoDiag}
The inputs to this algorithm are 
\( \{ c_k \} \),
\( \{ b_k \} \),
and
\( \{ r_k \} \).
The output is a sequence \( \{ e_k \} \)
that solves equation~(\ref{BlockTridiagonalEquation}).
\end{algorithm}
\begin{enumerate}

\item
\T{Set} \( d_1^f = b_1  \). \T{Set} \( s_1^f = r_1 \).

\T{For} \( k = 2 \) \T{To} \( N \) \T{:}

\begin{itemize}
\item
\T{Set} \( d_k^f = b_k - c_{k} (d_{k-1}^f)^{-1} c_{k}^\R{T} \).
\item
\T{Set} \( s_k^f = r_k - c_{k} (d_{k-1}^f)^{-1} s_{k-1} \).
\end{itemize}

\item
\T{Set} \( d_N^b = b_N  \) and \( s_N^b = r_N \).

\T{For} \( k = N-1, \ldots , 1 \),
\begin{itemize}
\item
\T{Set} 
\( d_k^b = b_k - c_{k+1}^\R{T} (d_{k+1}^b)^{-1} c_{k+1} \).
\item
\T{Set}
\( s_k^b = r_k - c_{k+1}^\R{T} (d_{k+1}^b)^{-1} s_{k+1} \).
\end{itemize}

\item
For \( k = 1 , \ldots , N \),
set \( e_k = (d_k^f + d_k^b - b_k)^{-1} ( s_k^f + s_k^b - r_k) \).
\end{enumerate}

It is easy to see why the MF smoother is often used in practice. 
Steps 1 and 2 can be done in parallel on two processors. 
Step 3 is independent across $k$, and so can be done
in parallel with up to $N$ processors. In the next section, 
we will use the insights into block tridiagonal systems 
to propose a new and more efficient algorithm. 
First, we finish our analysis of the MF smoother. 

Let $C$ denote the linear system in~\eqref{BlockTridiagonalEquation}, 
and $F$ denote the matrix whose action is equivalent 
to step 1 of Algorithm~\ref{algoDiag}, 
so that $FC$ is upper block triangular, and $Fr$ recovers blocks $\{s_k^f\}$.  
Let $B$ denote the matrix whose action is equivalent 
to steps 2 of Algorithm~\ref{algoDiag}, 
so that $BC$ is lower block triangular, and $Br$ recovers blocks $\{s_k^b\}$.  

\begin{theorem}
\label{thm:FullMF}
The solution $e$ to~\eqref{BlockTridiagonalEquation} is given by
Algorithm~\ref{algoDiag}. 
\end{theorem}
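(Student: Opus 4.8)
The plan is to work entirely at the matrix level, using the operators $F$ and $B$ introduced in the paragraph preceding the statement: $F$ implements step~1 of Algorithm~\ref{algoDiag}, so that $FC$ is upper block triangular with diagonal blocks $d_k^f$ and $Fr = \R{vec}(\{s_k^f\})$; $B$ implements step~2, so that $BC$ is lower block triangular with diagonal blocks $d_k^b$ and $Br = \R{vec}(\{s_k^b\})$. The single observation that drives the whole proof is that the combination $FC + BC - C = (F + B - I)C$ collapses to a \emph{block-diagonal} matrix whose $k$th diagonal block is exactly $d_k^f + d_k^b - b_k$.

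First I would pin down the off-diagonal structure of $FC$ and $BC$, not merely their triangularity. Forward elimination subtracts $c_k (d_{k-1}^f)^{-1}$ times the already-processed row $k-1$ from row $k$; since that processed row has a zero block in column $k+1$, the operation annihilates the subdiagonal block, turns the $(k,k)$ block into $d_k^f = b_k - c_k (d_{k-1}^f)^{-1} c_k^\R{T}$, and leaves the superdiagonal block $c_{k+1}^\R{T}$ untouched. Hence $FC$ retains the original superdiagonal of $C$. Symmetrically, backward elimination subtracts $c_{k+1}^\R{T} (d_{k+1}^b)^{-1}$ times row $k+1$ from row $k$, annihilating the superdiagonal, turning the $(k,k)$ block into $d_k^b$, and leaving the subdiagonal block $c_k$ untouched; so $BC$ retains the original subdiagonal of $C$. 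Subtracting $C$, which carries both the subdiagonal $c_k$ and the superdiagonal $c_{k+1}^\R{T}$, then cancels both off-diagonals and leaves precisely $\R{diag}(\{d_k^f + d_k^b - b_k\})$.

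With this identity in hand the conclusion is immediate. Applying $F + B - I$ to the governing equation $Ce = r$ of~\eqref{BlockTridiagonalEquation} gives $(FC + BC - C)e = Fr + Br - r = \R{vec}(\{s_k^f + s_k^b - r_k\})$. Because the operator on the left is block diagonal, the $k$th block equation decouples into $(d_k^f + d_k^b - b_k) e_k = s_k^f + s_k^b - r_k$, which is exactly the assignment in step~3 of Algorithm~\ref{algoDiag}; invertibility of each diagonal block (inherited from positive definiteness of $C$, via Theorems~\ref{algorithmStability} and~\ref{algorithmStabilityBackward}) then delivers the stated $e_k$.

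The main obstacle is the bookkeeping in the first step: one must verify carefully that forward elimination never perturbs the superdiagonal and that backward elimination never perturbs the subdiagonal. This rests on the fact that in a block tridiagonal matrix each processed pivot row has support confined to two adjacent block columns, so eliminating row $k$ against row $k\mp 1$ can only reach the diagonal block and the single off-diagonal being cleared. Once this support argument is made precise, the cancellation $FC + BC - C = \R{diag}(\{d_k^f + d_k^b - b_k\})$ is routine algebra, and no filter-theoretic input is required.
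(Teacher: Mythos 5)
Your proposal is correct and rests on exactly the same key identity as the paper's proof: $FC+BC-C$ is block diagonal with blocks $d_k^f+d_k^b-b_k$, and $(F+B-I)r$ has blocks $s_k^f+s_k^b-r_k$. The only (immaterial) difference is direction: the paper verifies that the algorithm's output $e=\left((F+B-I)C\right)^{-1}(F+B-I)r$ satisfies $Ce=r$ by factoring the inverse, whereas you multiply $Ce=r$ through by $F+B-I$ and decouple, which if anything sidesteps the paper's implicit assumption that $F+B-I$ is invertible.
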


\begin{proof}
The solution $e$ returned by Algorithm~\ref{algoDiag} can be written as follows: 
\begin{equation}
\label{FBsol}
e = ((F+B -I)C)^{-1}(F+B-I)r\;.
\end{equation}
To see this, note that $FC$ has the same blocks above the diagonal as $C$, 
and zero blocks below the diagonal. Analogously, $BC$ has the 
same blocks above the diagonal as $C$. Then 
\[
FC + BC - C 
\]
is block diagonal, with diagonal blocks given by 
$d_k^f + d_k^b - b_k$.
Finally, applying the system $F+B-I$ to $r$ 
yields the blocks $s_k^f + s_k^b - r_k$. 

The fact that $e$ solves~\eqref{BlockTridiagonalEquation} follows from the following calculation:
\[
\begin{aligned}
Ce &= C ((F+B -I)C)^{-1}(F+B-I)r \\
&= C C^{-1}(F+B -I)^{-1}(F+B-I)r \\
& = r\;.
\end{aligned}
\]
\end{proof}
Note that applying $F$ and $B$ to construct
$(F+B-I)r$ is an $O(n^3N)$ operation, since both $F$ and $B$ 
require recursive inversions of $N$ blocks, each of size $n\times n$, 
as is clear from steps 1,2 of Algorithm~\ref{algoDiag}. 

Before we consider stability conditions (i.e. conditions that guarantee 
invertibility of $d_k^f + d_k^b - b_k$), we demonstrate the equivalence 
of Algorithm~\ref{algoDiag} to the Mayne-Fraser smoother. 

\begin{lemma}
\label{MFchar}
When applied to the Kalman smoothing system~\eqref{smoothingSol}, 
the Mayne-Fraser smoother is equivalent to Algorithm~\ref{algoDiag}. 
In particular, the MF update can be written  
\begin{equation}
\label{MFsolution}
\hat x_k = (d_k^f + d_k^b - b_k)^{-1} (s_k^f + s_k^b - r_k)\;.
\end{equation}
\end{lemma}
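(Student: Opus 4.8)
The plan is to substitute the forward and backward quantities that have already been identified in the preceding theorems into step~3 of Algorithm~\ref{algoDiag}, and then recognize the resulting expressions as the classical two-filter combination. From Theorem~\ref{ThomasRTS} (see~\eqref{dDef},~\eqref{EquivalenceRHS}, and~\eqref{infoStructures}) the forward sweep produces the information-filter quantities
\[
d_k^f = P_{k|k}^{-1} + G_{k+1}^\top Q_{k+1}^{-1} G_{k+1}, \qquad s_k^f = y_{k|k} = P_{k|k}^{-1} x_{k|k},
\]
while from Theorem~\ref{BBTthm} the backward sweep produces the Mayne quantities $d_k^b = P_k + Q_k^{-1}$ and $s_k^b = -\phi_k$. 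Together with $b_k = D_k$ from~\eqref{KalmanData} and $r_k = H_k^\top R_k^{-1} z_k$ (augmented by the initial-condition term $Q_1^{-1} x_0$ when $k=1$), these are all the ingredients the combination in~\eqref{MFsolution} requires.

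First I would evaluate the matrix factor. Substituting the above and cancelling the shared $G_{k+1}^\top Q_{k+1}^{-1} G_{k+1}$ and $Q_k^{-1}$ terms gives
\[
d_k^f + d_k^b - b_k = P_{k|k}^{-1} + P_k - H_k^\top R_k^{-1} H_k .
\]
Invoking the information-form measurement update $P_{k|k}^{-1} = P_{k|k-1}^{-1} + H_k^\top R_k^{-1} H_k$ from~\eqref{Equivalence} reduces this to $P_{k|k-1}^{-1} + P_k$, i.e.\ the sum of the forward \emph{predicted} information (carrying $z_1,\dots,z_{k-1}$) and the backward information $P_k$ (carrying $z_k,\dots,z_N$). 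An identical cancellation on the vector factor, using $P_{k|k}^{-1} x_{k|k} = P_{k|k-1}^{-1} x_{k|k-1} + H_k^\top R_k^{-1} z_k$, yields $s_k^f + s_k^b - r_k = P_{k|k-1}^{-1} x_{k|k-1} - \phi_k$, the matching sum of the forward-predicted and backward information vectors (note that $-\phi_N = H_N^\top R_N^{-1} z_N$ is indeed the backward information vector initialized at $k=N$).

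The final step is to recognize $P_{k|k-1}^{-1} + P_k$ and $P_{k|k-1}^{-1} x_{k|k-1} - \phi_k$ as the MF two-filter smoothed information matrix $P_{k|N}^{-1}$ and information vector $P_{k|N}^{-1}\hat x_{k|N}$: the forward-prediction term summarizes $\{z_1,\dots,z_{k-1}\}$ and the backward term $(P_k,-\phi_k)$ summarizes $\{z_k,\dots,z_N\}$, so the two carry complementary, non-overlapping measurement information and combine additively. Hence~\eqref{MFsolution} evaluates to $\hat x_{k|N}$, which is precisely the MF update, and this is consistent with Theorem~\ref{thm:FullMF}, which already guarantees that Algorithm~\ref{algoDiag} returns the exact smoothed solution of~\eqref{smoothingSol}. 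I expect the main obstacle to be the bookkeeping that establishes this non-overlap: one must verify that subtracting $b_k$ and $r_k$ removes exactly the measurement contribution $H_k^\top R_k^{-1} H_k$ (and the datum $z_k$) that is double-counted because both the forward filtered quantity $P_{k|k}^{-1}$ and the backward quantity $P_k$ incorporate $z_k$. The $k=1$ block, which carries the initial condition $x_0$, must be checked separately but follows the same pattern used in Theorems~\ref{ThomasRTS} and~\ref{BBTthm}.
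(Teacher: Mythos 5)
Your proof is correct and follows essentially the same route as the paper's: both rest on the identifications from Theorems~\ref{ThomasRTS} and~\ref{BBTthm} (namely $d_k^f = P_{k|k}^{-1}+G_{k+1}^\top Q_{k+1}^{-1}G_{k+1}$, $s_k^f = y_{k|k}$, $d_k^b = P_k + Q_k^{-1}$, $s_k^b=-\phi_k$) together with the same cancellations against $b_k$ and $r_k$ from~\eqref{KalmanData} and~\eqref{smoothingSol}, and both land on the same intermediate object $\left(P_{k|k-1}^{-1}+P_k\right)^{-1}\left(P_{k|k-1}^{-1}x_{k|k-1}-\phi_k\right)$. The difference is one of direction and anchoring. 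The paper starts from Mayne's explicit formula (B.9) in~\cite{Mayne1966}, $\hat x_k = -(P_k+F_k)^{-1}(q_k+g_k)$, uses (B.6)--(B.7) to read off $F_k=\sigma_{k|k-1}^{-1}$ and $g_k$ as the (negative) forward predicted information vector, and translates term-by-term into $d_k^f+d_k^b-b_k$ and $s_k^f+s_k^b-r_k$; you instead reduce step 3 of Algorithm~\ref{algoDiag} to the predicted-plus-backward fusion form and then assert this \emph{is} the MF update on the grounds that the two terms carry non-overlapping measurement information and therefore combine additively. That final recognition is the one soft spot: the lemma claims equivalence with a specific published algorithm, so the identification should be closed by matching your reduced expression against Mayne's (B.9) (which it does match, with $F_k = P_{k|k-1}^{-1}$ and $-(q_k+g_k) = P_{k|k-1}^{-1}x_{k|k-1}-\phi_k$) rather than by the Bayesian-fusion heuristic, which, if pursued rigorously, amounts to re-deriving the two-filter formula itself. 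Everything else — the algebraic cancellations, the $k=1$ initial-condition bookkeeping, and the observation that $-\phi_N = H_N^\top R_N^{-1}z_N$ initializes the backward information vector — checks out and mirrors the paper's computation.
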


\begin{proof}
The MF smoother solution is given in~\cite[(B.9)]{Mayne1966}:
\[
\hat x_k = -(P_k + F_k)^{-1} (q_k + g_k)\;
\]
From Theorem~\ref{BBTthm}, we know $q_k = -s_k^b$, and 
$P_k = d_k^b - Q_{k}^{-1}$. 
Next, $F_k = \sigma_{k|k-1}^{-1}$ from~\cite[(B.6)]{Mayne1966}, 
and we have 
\[
\begin{aligned}
F_k &= \sigma_{k|k-1}^{-1} = \sigma_{k|k}^{-1} - H_k^TR_k^{-1}H_k\quad \text{~\cite[Chapter 2]{AravkinThesis2010} } \\
& = d_k^f - G_{k+1}^TQ_{k+1}^{-1}G_{k+1} - H_k^TR_k^{-1}H_k \quad\text{by~\eqref{dDef}} 
\end{aligned}
\]
Therefore, 
\[
\begin{aligned}
P_k + F_k &= d_k^b + d_k^f- Q_{k}^{-1} - G_{k+1}^TQ_{k+1}^{-1}G_{k+1} - H_k^TR_k^{-1}H_k  \\
&= d_k^b + d_k^f - b_k \quad\text{by~\eqref{KalmanData}}\;.
\end{aligned}
\]
Finally, $g_k = -\sigma_{k|k-1}^{-1}x_{k|k-1}$ from~\cite[(B.7)]{Mayne1966} and we have 
\[
\begin{aligned}
g_k = -\sigma_{k|k-1}^{-1}x_{k|k-1} &=  y_{k|k-1}\quad\text{by~\eqref{infoStructures}} \\
& = -(s_k^f + H_k^TR_k^{-1}z_k)\quad\text{by~\eqref{EquivalenceRHS}}\\
& = -(s_k^f - r_k)\quad\text{by~\eqref{smoothingSol}}\;.
\end{aligned}
\]
This gives $-(q_k + g_k) = s_k^f + s_k^b - r_k$, and the lemma is proved. 
\end{proof}

Lemma~\ref{MFchar} characterizes MF through its
relationship to the tridiagonal system~\eqref{BlockTridiagonalEquation}.
Our final result is with stability of the MF scheme, which can be understood
by considering the solution~\eqref{MFsolution}.

Note that at $k=1$, we have $d_1^f = b_1$, and so the 
matrix to invert in~\eqref{MFsolution} is simply $d_1^b$. At $k = N$, we have $d_N^b = b_N$, 
and the matrix to invert is $d_N^f$. Therefore, the MF scheme is vulnerable 
to numerical instability (at least at time $N$) when the system~\eqref{BlockTridiagonalEquation}
is ill-conditioned.  In particular, if we apply MF to the numerical example in 
Section~\ref{sec:Numerics}, block $d_3^f$~\eqref{d3f} will have to be used.  

The following theorem shows that the MF soother has the same 
stability guarantees as the RTS smoother for well-conditioned systems.

\begin{theorem}
\label{thm:MFstability}
Consider any block tridiagonal system $A\in \mathbb{R}^{Nn}$ 
of form~\eqref{BlockTridiagonalEquation}. 
and suppose we are given the bounds 
$\alpha_L$ and $\alpha_U$ for the lower and 
upper bounds of the eigenvalues of this system
\begin{equation}
\label{eigenvalueBounds}
0 < \alpha_L \leq \lambda_{\min}(A) \leq \lambda_{\max} (A) \leq \alpha_U\;.
\end{equation}
Then we also have 
\begin{equation}
\label{eigenvalueBlockBounds}
0 < \alpha_L \leq \lambda_{\min}(d_k^f + d_k^b - b_k) \leq \lambda_{\max} (d_k^f + d_k^b - b_k) \leq \alpha_U \quad \forall k\;.
\end{equation}
\end{theorem}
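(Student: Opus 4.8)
The plan is to recognize the MF diagonal block $d_k^f + d_k^b - b_k$ as a Schur complement of the full matrix $A$ onto its $k$-th block, and then to exploit the fact that Schur complements of a symmetric positive definite matrix inherit the eigenvalue bounds of the parent matrix. Since $\alpha_L > 0$, the matrix $A$ is symmetric positive definite, so every principal submatrix is invertible; in particular, for each fixed $k$ the leading block $A_{pp}$ (the blocks indexed $1,\dots,k-1$) and the trailing block $A_{ff}$ (the blocks indexed $k+1,\dots,N$) are positive definite.

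First I would establish that the forward and backward pivots are themselves Schur complements. It is standard that block Gaussian elimination produces pivots equal to Schur complements of leading (respectively trailing) principal submatrices; concretely, $d_k^f = b_k - A_{kp} A_{pp}^{-1} A_{pk}$ and $d_k^b = b_k - A_{kf} A_{ff}^{-1} A_{fk}$, where $A_{pk}$ couples block $k$ to the past only through $c_k^\R{T}$ (in its last block-row) and $A_{fk}$ couples it to the future only through $c_{k+1}$ (in its first block-row). This can be proved by the same inductive reduction used in Theorem~\ref{algorithmStability} and Theorem~\ref{algorithmStabilityBackward}, or simply by unrolling the recursions $d_k^f = b_k - c_k (d_{k-1}^f)^{-1} c_k^\R{T}$ and $d_k^b = b_k - c_{k+1}^\R{T}(d_{k+1}^b)^{-1}c_{k+1}$ and matching them against the Schur-complement expressions.

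Next I would combine these two identities. Because block $k$ splits the index set into a ``past'' and a ``future'' that are mutually decoupled in $A$ (the past/future off-diagonal blocks vanish), the matrix $\R{diag}(A_{pp}, A_{ff})$ is block diagonal and its inverse splits accordingly. Hence $d_k^f + d_k^b - b_k = b_k - A_{kp}A_{pp}^{-1}A_{pk} - A_{kf}A_{ff}^{-1}A_{fk}$ is exactly the Schur complement $S_k$ of $A$ obtained by eliminating every block except block $k$; this is consistent with the block-diagonalization already recorded in Theorem~\ref{thm:FullMF}. I would then invoke two classical facts about the Schur complement of an SPD matrix: that it is the inverse of the corresponding diagonal block of the inverse, $S_k^{-1} = (A^{-1})_{kk}$, and that $(A^{-1})_{kk}$ is a principal submatrix of $A^{-1}$, so Cauchy interlacing gives $\lambda_{\min}(A^{-1}) \le \lambda_{\min}((A^{-1})_{kk})$ and $\lambda_{\max}((A^{-1})_{kk}) \le \lambda_{\max}(A^{-1})$. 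Since $\lambda_{\min}(A^{-1}) = 1/\lambda_{\max}(A) \ge 1/\alpha_U$ and $\lambda_{\max}(A^{-1}) = 1/\lambda_{\min}(A) \le 1/\alpha_L$, inverting these bounds and using $S_k = [(A^{-1})_{kk}]^{-1}$ yields $\alpha_L \le \lambda_{\min}(S_k) \le \lambda_{\max}(S_k) \le \alpha_U$, which is precisely~\eqref{eigenvalueBlockBounds}.

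The main obstacle is the first step: cleanly justifying that the forward and backward recursions really do compute the Schur complements with respect to the past and the future respectively, while carefully tracking that the only nonzero couplings between block $k$ and its neighbors are $c_k^\R{T}$ and $c_{k+1}$, so that $A_{kp}A_{pp}^{-1}A_{pk}$ collapses to $c_k (d_{k-1}^f)^{-1} c_k^\R{T}$ and $A_{kf}A_{ff}^{-1}A_{fk}$ collapses to $c_{k+1}^\R{T}(d_{k+1}^b)^{-1}c_{k+1}$. Once this identification is in place, the decoupling of past and future and the interlacing argument are routine, and notably give the bounds directly from the eigenvalue bounds on $A$ without any reference to the measurement term, mirroring the situation for RTS in Theorem~\ref{algorithmStability}.
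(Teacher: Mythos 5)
Your proof is correct, but it finishes by a genuinely different mechanism than the paper. Both arguments begin the same way, by writing $d_k^f + d_k^b - b_k = b_k - c_k (d_{k-1}^f)^{-1} c_k^\R{T} - c_{k+1}^\R{T} (d_{k+1}^b)^{-1} c_{k+1}$; but the paper then views this as the middle pivot obtained by running FBT from above and BBT from below on a reduced $3\times 3$ block system with diagonal $(d_{k-1}^f,\, b_k,\, d_{k+1}^b)$, and appeals to the congruence-transformation technique in the proofs of Theorems~\ref{algorithmStability} and~\ref{algorithmStabilityBackward} (note: to the \emph{proofs}, since the theorem statements only bound the pivots, not the spectra of the reduced systems) to conclude the eigenvalue bounds are preserved at each elimination step. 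You instead identify $d_k^f + d_k^b - b_k$ exactly as the Schur complement $S_k$ of $A$ onto block $k$ --- using the decoupling of past and future in a block tridiagonal matrix, which is the same observation underlying Theorem~\ref{thm:FullMF} --- and then invoke the classical identity $S_k^{-1} = (A^{-1})_{kk}$ together with Cauchy interlacing (really just the Rayleigh-quotient bound) applied to the principal submatrix $(A^{-1})_{kk}$ of $A^{-1}$, plus $\lambda_{\min}(A^{-1}) = 1/\lambda_{\max}(A)$ and $\lambda_{\max}(A^{-1}) = 1/\lambda_{\min}(A)$. Your route buys something the paper's does not state: the exact characterization of the MF combination matrix in~\eqref{MFsolution} as $\left[(A^{-1})_{kk}\right]^{-1}$, i.e.\ (in the Kalman context) the inverse of the posterior covariance of $x_k$, from which the bounds for \emph{all} $k$ follow in one stroke from textbook facts, with no inductive reduction. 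The paper's route, in exchange, stays entirely self-contained, reusing only machinery already developed for FBT and BBT. The one place you must be careful is the identification of $d_k^f$ and $d_k^b$ with Schur complements of the leading and trailing principal submatrices; you flag this correctly, and the induction you sketch (using $[(A_{1:k-1,1:k-1})^{-1}]_{k-1,k-1} = (d_{k-1}^f)^{-1}$, which is itself an instance of the same Schur-inverse identity) closes it.
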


\begin{proof}
At every intermediate step, it is easy to see that 
\[
\begin{aligned}
d_k^f + d_k^b - b_k &= b_k - c_{k}(d_{k-1}^f)^{-1} c_{k}^T + b_k - c_{k+1}^T(d_{k+1}^b)^{-1}c_{k+1} - b_k\\
& = b_k - c_{k}(d_{k-1}^f)^{-1} c_{k}^T  - c_{k+1}^T(d_{k+1}^b)^{-1}c_{k+1}
\end{aligned}
\]
This corresponds exactly to isolating the middle block of the three by three system 
\[
\left(
\begin{matrix}
d_{k-1}^f & c_k^T & 0\\
c_k & b_k & c_{k+1}^T \\
0 & c_{k+1} & d_{k+1}^b
\end{matrix}
\right)\;.
\]
By Theorems~\ref{algorithmStability} and~\ref{algorithmStabilityBackward}, 
the eigenvalues of this system are bounded by
the be eigenvalues of the full system. Applying these theorems to the middle block
shows that the system in~\eqref{MFsolution} also satisfies such a bound. 
\end{proof}
Therefore, for well-conditioned systems, the MF scheme 
shares the stability results of RTS. 

In the next section we propose an algorithm that entirely eliminates
the combination step~\eqref{MFsolution}, and allows a 2-processor parallel 
scheme.

\section{New Two Filter Block Tridiagonal Algorithm}
\label{sec:NewAlgo}

In this section, we take advantage of the combined insight from forward and 
backward algorithms (FBT and BBT), to propose an efficient parallelized algorithm 
for block tridiagonal systems. In particular, in view of the analysis
regarding the RTS and M algorithms, it is natural to combine them by 
running them at the same time (using two processors). However, there is no need
to run them all the way through the data and then combine (as in the MF scheme)

Instead, the algorithms can work on the same matrix, meet in the middle, and exchange information,
obtaining the smoothed estimate for the point in the middle of the time series. 
At this point, each need only back-substitute into its own (independent) linear system
which is half the size of the original. 

We illustrate on a $6 \times 6$ system of type~\eqref{BlockTridiagonalEquation}, 
where steps 1,2 of FBT and BBT have been simultaneously
applied (for three time points each). The resulting equivalent linear system has form
\begin{equation}
\label{HalfFBpre}
\left(
\begin{matrix}
d^f_1 & c_2^T & 0 & 0 & 0 & 0\\
0 & d^f_2 & c_3^T & 0 & 0 & 0\\
0 &  0& d^f_3 & c_4^T & 0 & 0\\
0 &  0& c_4 & d^b_4 & 0 & 0\\
0 &  0& 0 & c_5 & d^b_5 & 0\\
0 &  0& 0 & 0 & c_6 & d^b_6\\
\end{matrix}
\right)
\left(
\begin{matrix}
e_1 \\ e_2 \\ e_3 \\ e_4\\ e_5\\ e_6
\end{matrix}
\right)
=
\left(
\begin{matrix}
s_1^f\\s_2^f\\s_3^f\\s_4^b\\s_5^b\\s_6^b
\end{matrix}
\right)
\end{equation}
The superscripts $f$ and $b$ denote whether the variables correspond to steps 1,2 of FBT or BBT, respectively. 
At this juncture, we have a choice of which algorithm is used to decouple the system. Supposing the BBT 
takes the lead,  
the requisite row operations are 
\begin{equation}
\label{middleOp}
\begin{aligned}
\text{row}_3 &\gets \text{row}_3 - c_4^T (d_4^b)^{-1}\text{row}_4 \\
\text{row}_4 &\gets \text{row}_4 - c_4 (\hat d_3^f)^{-1}\text{row}_3 
\end{aligned}
\end{equation}
where 
\[
\begin{aligned}
\hat d_3^f &= d_3^f - c_4^T (d_4^b)^{-1}c_4\\
\hat s_3^f & = s_3^f - c_4^T(d_4^b)^{-1}s_4\\
\hat s_4^b & = s_4^f - c_4(\hat d_3^f)^{-1}\hat s_3^f\\
\end{aligned}
\]
After these operations, the equivalent linear system is given by 
\begin{equation}
\label{HalfFBpost}
\left(
\begin{matrix}
d^f_1 & c_2^T & 0 & 0 & 0 & 0\\
0 & d^f_2 & c_3^T & 0 & 0 & 0\\
0 &  0& \hat d^f_3 & 0 & 0 & 0\\
0 &  0& 0 & d^b_4 & 0 & 0\\
0 &  0& 0 & c_5 & d^b_5 & 0\\
0 &  0& 0 & 0 & c_6 & d^b_6\\
\end{matrix}
\right)
\left(
\begin{matrix}
e_1 \\ e_2 \\ e_3 \\ e_4\\ e_5\\ e_6
\end{matrix}
\right)
=
\left(
\begin{matrix}
s_1^f\\s_2^f\\\hat s_3^f\\\hat s_4^b\\s_5^b\\s_6^b
\end{matrix}
\right)
\end{equation}
This system has two independent (block-diagonal) components (note that $\hat x_3$ and $\hat x_4$ are immediately available), 
and the algorithm can be finished in parallel, using steps 3,4 of algorithms FBT and BBT. 

Now that the idea is clear, we establish the formal algorithm. Note the similarities and differences to the MF smoother 
in Algorithm~\ref{algoDiag}. For $a \in \mathbb{R}_+$, we use the notation $[a]$ to denote the floor function. 

\begin{algorithm}[New two filter block tridiagonal algorithm]
\label{algoPBT}
The inputs to this algorithm are 
\( \{ c_k \} \),
\( \{ b_k \} \),
and
\( \{ r_k \} \).
The output is a sequence \( \{ e_k \} \)
that solves equation~(\ref{BlockTridiagonalEquation}).
\end{algorithm}
\begin{enumerate}

\item
\begin{itemize}
\item
\T{Set} \( d_1^f = b_1  \). \T{Set} \( s_1^f = r_1 \).

\T{For} \( k = 2 \) \T{To} \( [N/2] - 1 \) \T{:}

\begin{itemize}
\item
\T{Set} \( d_k^f = b_k - c_{k} (d_{k-1}^f)^{-1} c_{k}^\R{T} \).
\item
\T{Set} \( s_k^f = r_k - c_{k} (d_{k-1}^f)^{-1} s_{k-1} \).
\end{itemize}

\item
\T{Set} \( d_N^b = b_N  \) and \( s_N^b = r_N \).

\T{For} \( k = N, \ldots , [N/2] \),
\begin{itemize}
\item
\T{Set} 
\( d_k^b = b_k - c_{k+1}^\R{T} (d_{k+1}^b)^{-1} c_{k+1} \).
\item
\T{Set}
\( s_k^b = r_k - c_{k+1}^\R{T} (d_{k+1}^b)^{-1} s_{k+1} \).
\end{itemize}
\end{itemize}

\item 
\T{Set}
\[
\begin{aligned}
\hat d_{[N/2]-1}^f &= d_{[N/2]-1}^f - c_{[N/2]}^T (d_{[N/2]}^b)^{-1}c_{[N/2]}\\
\hat s_{[N/2]-1}^f & = s_{[N/2]-1}^f - c_{[N/2]}^T(d_{[N/2]}^b)^{-1}s_{[N/2]}\\
\hat s_{[N/2]}^b & = s_{[N/2]}^f - c_{[N/2]}(\hat d_{[N/2]-1}^f)^{-1}s_{[N/2]-1}^f
\end{aligned}
\]

\item
\begin{itemize}
\item
\T{Set} \( e_{[N/2]-1} = (\hat d_{[N/2]-1})^{-1} \hat s_{[N/2]-1} \).\\
\T{For} \( k = {[N/2]-2}\) \T{To} \( 1 \) \T{:}
\begin{itemize}
\item
\T{Set} \( e_k = (\hat d_k^f)^{-1} ( \hat s_k^f - c_{k+1}^\R{T} e_{k+1} ) \).
\end{itemize}

\item
\T{Set} \( e_{[N/2]} = (\hat d_{[N/2]}^b)^{-1} \hat s_{[N/2]}^b \).

\T{For} \( k = {[N/2+1]} , \ldots , N \),
\begin{itemize}
\item
\T{Set} \( e_k = (\hat d_k^b)^{-1} ( \hat s_k^b - c_k e_{k-1} ) \).
\end{itemize}
\end{itemize}

\end{enumerate}

Steps 1 and 3 of this algorithm can be done in parallel on two processors. The middle 
step 2 is the only time the processors need to exchange information. This algorithm therefore
has the desirable parallel features of Algorithm~\ref{algoDiag}, but in addition, 
step 3 of Algorithm~\ref{algoDiag} is not required. 

We now present a theorem that shows that when the overall system is well-conditioned, 
Algorithm~\ref{algoPBT} has the same stability results as FBT and BBT. 

\begin{theorem}
\label{thm:PBTstability}

Consider any block tridiagonal system $A\in \mathbb{R}^{Nn}$ 
of form~\eqref{BlockTridiagonalEquation}. 
and suppose we are given the bounds 
$\alpha_L$ and $\alpha_U$ for the lower and 
upper bounds of the eigenvalues of this system
\begin{equation}
\label{eigenvalueBounds}
0 < \alpha_L \leq \lambda_{\min}(A) \leq \lambda_{\max} (A) \leq \alpha_U\;.
\end{equation}
Then same bounds apply to all blocks $d_k^f$, $d_k^b$ of Algorithm~\ref{algoPBT}.
\end{theorem}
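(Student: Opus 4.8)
The plan is to reduce the claim to the stability results already established for the pure forward and backward sweeps, and then to dispose of the single extra ``middle'' block produced in step~2. First I would note that the recursion for $d_k^f$ in step~1 of Algorithm~\ref{algoPBT} is verbatim the FBT recursion $d_k^f = b_k - c_k (d_{k-1}^f)^{-1} c_k^\R{T}$ and reads only the original data blocks $b_k$, $c_k$ of $A$. Consequently, for $k = 1, \dots, [N/2]-1$ the blocks $d_k^f$ agree with the first $[N/2]-1$ forward blocks generated by running Algorithm~\ref{ForwardAlgorithm} on the full matrix $A$; symmetrically, for $k = [N/2], \dots, N$ the blocks $d_k^b$ agree with the backward blocks generated by Algorithm~\ref{algoBBT} on $A$. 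Theorems~\ref{algorithmStability} and~\ref{algorithmStabilityBackward} then deliver $\alpha_L \le \lambda_{\min}(d_k^f), \lambda_{\min}(d_k^b)$ and $\lambda_{\max}(d_k^f), \lambda_{\max}(d_k^b) \le \alpha_U$ for every such block, which already proves the statement for the unhatted blocks.

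What remains are the hatted quantities of step~2, the key one being $\hat d_{[N/2]-1}^f = d_{[N/2]-1}^f - c_{[N/2]}^\R{T}(d_{[N/2]}^b)^{-1} c_{[N/2]}$. For these I would lean on the Schur-complement reading of the two stability theorems: each elimination step replaces a symmetric positive definite block tridiagonal matrix by the Schur complement of its leading (for FBT) or trailing (for BBT) diagonal block, and the Schur complement of a symmetric positive definite matrix whose spectrum lies in $[\alpha_L, \alpha_U]$ again has its spectrum in $[\alpha_L, \alpha_U]$ --- the upper bound because the Schur complement is dominated by a principal submatrix, the lower bound because the inverse of the Schur complement is a principal submatrix of the inverse. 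This monotonicity is exactly the mechanism behind~\eqref{eigenvalueBlockBounds} in the earlier proofs, and I would state it once as a short lemma.

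Next I would run the forward elimination over blocks $1, \dots, [N/2]-2$ and the backward elimination over blocks $[N/2]+1, \dots, N$ \emph{simultaneously}, in the symmetric congruence form used in the proofs of Theorems~\ref{algorithmStability} and~\ref{algorithmStabilityBackward}. Because these two sweeps touch disjoint block rows and columns, the associated congruences commute, and their net effect on $A$ is to leave a symmetric reduced system that is block diagonal except for the $2\times 2$ coupling
\[
M = \left( \begin{matrix} d_{[N/2]-1}^f & c_{[N/2]}^\R{T} \\ c_{[N/2]} & d_{[N/2]}^b \end{matrix} \right),
\]
whose diagonal blocks are precisely the middle entries appearing in~\eqref{HalfFBpre}. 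Applying the Schur-complement monotonicity repeatedly along both sweeps shows that $M$ inherits the eigenvalue bounds of $A$. Finally, $\hat d_{[N/2]-1}^f$ is exactly the Schur complement of the trailing block $d_{[N/2]}^b$ in $M$, so one last use of the monotonicity places its spectrum in $[\alpha_L, \alpha_U]$; the partner $\hat d_{[N/2]}^b$ obtained by eliminating in the opposite order is treated identically.

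The step I expect to be the main obstacle is justifying this simultaneous decoupling rigorously: I must check that the forward sweep on the top half and the backward sweep on the bottom half genuinely commute, that every intermediate reduced system remains symmetric positive definite with spectrum in $[\alpha_L, \alpha_U]$, and that the only off-diagonal coupling surviving both sweeps is the single block $c_{[N/2]}$ --- so that the residual middle system is honestly the $2\times 2$ matrix $M$ rather than something larger. Once this bookkeeping is settled, the eigenvalue control on all blocks, hatted and unhatted alike, follows immediately from the Schur-complement monotonicity carried over from Theorems~\ref{algorithmStability} and~\ref{algorithmStabilityBackward}.
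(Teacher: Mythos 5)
Your proposal is correct, and structurally it follows the paper's proof: the unhatted blocks are exactly those produced by full FBT and BBT runs on $A$, so Theorems~\ref{algorithmStability} and~\ref{algorithmStabilityBackward} cover them, and the only new object is the hatted middle block, which is one further elimination step. The genuine difference lies in the mechanism you use to propagate the spectral bounds. The paper appeals to ``the proof technique'' of those two theorems (unit congruences plus the augmented-eigenvector contradiction) and then disposes of $\hat d^f_{[N/2]-1}$ by citing Theorem~\ref{algorithmStabilityBackward} with no further detail; you instead isolate a standalone Schur-complement monotonicity lemma --- the upper bound because the Schur complement is dominated by a principal submatrix, the lower bound because the inverse of the Schur complement is a principal submatrix of the inverse --- identify the surviving middle coupling as the $2\times 2$ system $M$ with diagonal blocks $d^f_{[N/2]-1}$, $d^b_{[N/2]}$ and off-diagonal block $c_{[N/2]}$, observe that $M$ is itself the Schur complement in $A$ of all the eliminated blocks, and then apply the lemma once more to $M$. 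This buys rigor exactly where the paper is thinnest: a congruence $W A W^{\R{T}}$ does \emph{not} preserve eigenvalues, so the claim that the reduced middle system inherits the bounds of $A$ is not automatic and requires either your Schur-complement identification or the paper's composed-eigenvector argument; your version settles it in one line once $M$ is identified. The bookkeeping you flag as the main obstacle does go through: the forward multipliers occupy block rows and columns entirely within the first half of the indices, the backward multipliers entirely within the second half, so the two families of elementary congruences commute, neither sweep alters the single coupling block $c_{[N/2]}$, and the reduced matrix is block diagonal apart from $M$ --- the symmetric counterpart of~\eqref{HalfFBpre}.
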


\begin{proof}
Going from~\eqref{HalfFBpre} to~\eqref{HalfFBpost} (steps 1,2 of Algorithm~\ref{algoPBT}),  
by the proof technique of Theorems~\ref{algorithmStability} and~\ref{algorithmStabilityBackward}
the eigenvalues of the lower block are bounded by the eigenvalues of the full system. 
In step 3, the matrix $\hat d^f_{[N/2]-1}$ created in operation~\eqref{middleOp} has the same property, 
by Theorem~\eqref{algorithmStabilityBackward}. The same results holds regardless of whether 
the FBT or BBT algorithms performs the `combination' step 3. 
\end{proof}


\section{Conclusions}


In this paper, we have characterized the numerical stability of block tridiagonal systems
that arise in Kalman smoothing, see theorems~\ref{simpleBounds} and~\ref{singularTheorem}.
The analysis revealed that last blocks of the system have a strong effect on the system overall,
and that the system may be numerically stabilized by changing these blocks. 
In the Kalman smoothing context, the stability conditions 
in theorems~\ref{simpleBounds} and~\ref{singularTheorem}
do not require $a_k$ in~\eqref{triSys} to be invertible. In fact, they may be singular,
which means the process matrices $G_k$, or derivatives of nonlinear process functions 
$g_k^{(1)}$, do not have to be invertible. 

We then showed that any well-conditioned symmetric block tridiagonal system 
can be solved in a stable manner with the FBT, which is equivalent to RTS  
in the Kalman smoothing context. 
We also showed that the forgotten M smoother, i.e. Algorithm A in~\cite{Mayne1966}, is equivalent 
to the BBT algorithm. BBT shares the stability properties of FBT for well-conditioned systems, 
but is unique among the algorithms considered because it 
can remain stable even when applied to ill-conditioned systems. 

We also characterized the standard MF scheme based on the two-filter formula, 
showing how it solves the underlying system, and proving that it has the same numerical
properties of RTS but does not have the numerical feature of M. 

Finally, we designed a new stable parallel algorithm, which is more efficient 
than MF and has the same stability guarantee as RTS for well-conditioned
systems. The proposed algorithm is parallel, simple to implement, and stable for well-conditioned 
systems. It is also more efficient than smoothers based on two independent filters, 
since these approaches require full parallel passes and then a combination step. 
Such a combination step is unnecessary here --- after the parallel forward-backward pass, 
we are done.

Taken together, these results provide insight into both numerical stability and algorithms for 
Kalman smoothing systems. In this regard, it is worth stressing that block tridiagonal systems 
arise not only in the classic linear Gaussian setting, but also in many new applications,  
including nonlinear process/measurement models, 
robust penalties, and inequality constraints, 
e.g. see \cite{Durovic1999,Cipra1997,AravkinIeee2011,AravkinBurkePillonetto2013,Farahmand2011}. 
As a result, the theory and the
new solvers of general block tridiagonal systems  
developed in this paper have immediate application to a wide range of problems.

\bibliographystyle{plain}
\bibliography{blk_tri}

\end{document}